\documentclass[11pt]{amsart}

\usepackage{amsmath,amssymb,amsthm}

\usepackage[T1]{fontenc}
\usepackage[utf8]{inputenc}
\usepackage{lmodern}
\usepackage{microtype}
\usepackage{enumitem}
\usepackage[hidelinks]{hyperref}
\usepackage[noabbrev,capitalize]{cleveref}
\usepackage{fullpage}

\newtheorem{theorem}{Theorem}[section]

\newtheorem{lemma}[theorem]{Lemma}
\newtheorem{corollary}[theorem]{Corollary}
\theoremstyle{remark}

\newcommand{\E}{\mathbb E}
\newcommand{\Prob}{\mathbb P}
\newcommand{\F}{\mathcal F}
\newcommand{\1}{\mathbf 1}

\title{Almost sure upper bound for sums of random multiplicative functions and critical chaos}
\author{William Verreault}
\date{}

\address{Department of Mathematics \\
University of Toronto   \\ 
Toronto, ON\\
Canada}
\email{william.verreault@utoronto.ca}  

\begin{document}

\vspace*{-4mm}

\begin{abstract}
Let $f$ be a Steinhaus or Rademacher random multiplicative function. We use methods from the theory of critical chaos to improve on the best known upper bound for partial sums of random multiplicative functions. In particular, our results imply that for any $\varepsilon>0$, almost surely
$$
\Big|\sum_{n\le x}f(n)\Big|
 \ll_{f,\varepsilon}\sqrt{x}(\log_2x)^{1/4}(\log_3x)^{1+\varepsilon}.
$$
This proves in a strong form a conjecture of Harper on large fluctuations of partial sums of random multiplicative functions, and determines the exact corresponding logarithmic exponent.
\end{abstract}

\maketitle

\section{Introduction}

\subsection{Background}
We consider the two standard models of random multiplicative functions. In the \emph{Steinhaus} model, let $(f(p))_p$ be independent random variables indexed by primes and uniformly distributed on the unit circle. We set $f(1)=1$ and extend $f$ completely multiplicatively to $\mathbb N$. In the \emph{Rademacher} model, let $(f(p))_p$ instead be independent random variables uniformly distributed on $\{\pm1\}$. We set $f(1)=1$, extend $f$ multiplicatively to the squarefree integers, and set $f(n)=0$ when $n$ is not squarefree. This model was introduced by Wintner \cite{Wintner} as a probabilistic analogue of the M\"obius function in number theory, while Steinhaus random multiplicative functions serve as probabilistic models for Dirichlet characters $n\mapsto \chi(n)$ and for the Archimedean characters $n\mapsto n^{it}$. 

The study of these functions, especially through their partial sums
\begin{equation*}
  M_f(x)=\sum_{n\le x}f(n),
\end{equation*} has attracted considerable attention at the interface
of number theory, probability, and analysis in recent years, e.g., \cite{Hough}, \cite{ChatterjeeSoundararajan}, \cite{HarperLimit}, \cite{LTW}, \cite{HNR}, \cite{GranvilleHarperSoundararajan}, \cite{BondarenkoSeip}, \cite{HeapLindqvist}, \cite{HarperHigh}, \cite{HarperLow}, \cite{AggarwalChaos}, \cite{AggarwalEtAl}, \cite{Mastrostefano}, \cite{SoundararajanZaman}, \cite{HarperLarge}, \cite{KlurmanShkredovXu},  \cite{SoundararajanXu},  \cite{Caich}, \cite{CaichShortIntervals},  \cite{GorodetskyWongCLT},  \cite{HardyWeighted},  \cite{HardyExponential},  \cite{PandeyWangXu},  \cite{XuCancellation}, \cite{GorodetskyWongLimit}, \cite{HardyLargePrime}, \cite{HofmannEtAl}, \cite{HobanEtAl}, and \cite{HarperSoundararajanXu}.
These works concern, among other topics, moments and limiting
distributions, almost sure fluctuations, short intervals and
restricted sums, and analogous models over
function fields. The list is far from exhaustive; for broader background and further
references, we refer to Harper's recent survey \cite{HarperSurvey} and to
the extensive introductions of \cite{HarperLow,GorodetskyWongLimit}.

In this paper, we are concerned with the almost sure fluctuations of $M_f(x)$.

\subsection{Main result}
The goal of this paper is to prove the following improvement to the known almost sure upper bounds for partial sums of random multiplicative functions. Here $\log_k x$ denotes the $k$-th iterate of the natural logarithm.

\begin{theorem}\label{thm:main}
Let $G:[3,\infty)\to(0,\infty)$ be eventually nondecreasing and suppose that
\begin{equation*}
  \sum_{\ell\ge3}\frac{\ell\log\ell}{G(\ell)^2}<\infty.
\end{equation*}
For a Steinhaus or Rademacher random multiplicative function $f$, almost surely
as $x\to\infty$,
\begin{equation*}
  |M_f(x)|
  \ll_{f,G}
  \sqrt{x}(\log_2x)^{1/4}G(\log_3x).
\end{equation*}
\end{theorem}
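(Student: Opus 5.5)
We follow Harper's strategy for almost sure upper bounds, replacing the multiplicative-chaos input by sharper estimates from critical chaos. First we discretize. Let $x_k$ be a sparse test sequence, for example $x_k=e^{k^{c}}$ or $x_k=e^{e^{k/K}}$, and split $\log_2 x$ into blocks $\ell=\lfloor\log_2\log x\rfloor$, so that $\ell\approx\log_3x$. Inside block $\ell$ there are roughly $e^{\ell}$ scales $x_k$. Then
\[
\sum_{\ell}\Prob\Big(\max_{x\in\text{block }\ell}\frac{|M_f(x)|}{\sqrt{x}(\log_2x)^{1/4}G(\ell)}>C\Big)<\infty
\]
gives the theorem by Borel--Cantelli. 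Following Harper, we condition on $\F_{y}$, the $\sigma$-algebra generated by $(f(p))_{p\le y}$, with $y=x^{1/\log_2x}$ or similar. Write $M_f(x)=\sum_{n\le x,\,P(n)\le y}f(n)+\sum_{y<p\le x}f(p)M_f^{(y)}(x/p)$, where $M_f^{(y)}$ denotes the $y$-smooth partial sum. The first term is negligible. The second is a martingale in the large primes, so conditionally it is subgaussian with variance roughly
\[
V(x)\asymp \frac{x}{\log x}\int_{-\infty}^{\infty}\frac{|F_y(1/2+it)|^2}{|1/2+it|^2}\,dt,
\]
where $F_y(s)=\prod_{p\le y}(1-f(p)p^{-s})^{-1}$ (Rademacher analogue). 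We handle the sup over $x$ within a block by Doob's maximal inequality and a chaining over $x$-intervals, as in Harper's treatment.

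The key step is a tail bound for the critical chaos mass $\frac{1}{\log y}\int|F_y(1/2+it)|^2\frac{dt}{|1/2+it|^2}$. This is the step we expect to be hardest. At criticality the mean is $\asymp1$ but the typical size is $\asymp(\log_2y)^{-1/2}$; this is exactly the $(\log_2x)^{1/4}$ gain over the trivial bound once the square root is taken. We need more than typical size: a tail estimate of the form
\[
\Prob\Big(\sqrt{\log_2y}\cdot\text{mass}>\lambda\Big)\ll\frac{\log\lambda}{\lambda},
\]
uniformly, and simultaneously for all the $e^{\ell}$ scales in a block. To get it we use a barrier/ballistic argument. We restrict to the event that the random walk $\log|F_{e^{e^j}}(1/2+it)|$ stays below $j+\log\lambda$ (barrier) for all $j$ and $t$ in a unit window. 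The probability of leaving the barrier costs $e^{-\log\lambda}$-type bounds. On the event, the first moment is computed via a Girsanov tilt and a ballot theorem, which produces the $1/\sqrt{\log_2y}$ decay times a factor linear in the barrier height. Because consecutive scales in a block share all but the top primes, we control the maximum over a block by a single barrier event at the block's largest scale, plus a union bound over increments.

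Finally we combine the pieces. Conditionally on the chaos mass, the subgaussian bound gives, for each block,
\[
\Prob(\text{fail in block }\ell)\ll \E\min\big(1,\,e^{\ell}\,e^{-cG(\ell)^2/\text{mass-scaled}}\big)+\Prob(\text{mass large}).
\]
Optimizing the threshold $\lambda\approx G(\ell)^2/\ell$ makes the block failure probability $\ll \ell\log\ell/G(\ell)^2$. The factor $\ell$ comes from the union over the $e^{\ell}$ points, converted via $\log$ in the subgaussian exponent, and $\log\ell$ comes from $\log\lambda$. By hypothesis, $\sum_\ell \ell\log\ell/G(\ell)^2<\infty$, so Borel--Cantelli concludes. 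Eventual monotonicity of $G$ lets us replace $G(\log_3x)$ by $G(\ell)$ on each block. The Rademacher case is identical, using the Euler product $\prod(1+f(p)p^{-s})$ and real-valued walks with the same critical normalization.
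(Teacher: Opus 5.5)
\textbf{There is a genuine gap: the decomposition is incorrect.} Your identity $M_f(x)=\Psi_f(x,y)+\sum_{y<p\le x}f(p)\Psi_f(x/p,y)$ is false for $y=x^{1/\log_2x}$, and indeed for any $y<\sqrt{x}$. The second sum only counts $n=pm$ with exactly one prime factor exceeding $y$, while most $n\le x$ have at least two.

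Your own variance formula exposes this. By Parseval, $\frac{x}{\log x}\int_{\mathbb R}|F_y(1/2+it)|^2|1/2+it|^{-2}\,dt\asymp x\frac{\log y}{\log x}Z_y=xZ_y/\log_2x$. Its mean is $x/\log_2x$, far below $\E|M_f(x)|^2\asymp x$.

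No single conditioning level works. The right object is the martingale indexed by the largest prime factor, $\sum_p f(p)\Psi_f'(x/p,p)$, whose coefficients are only $\F_{p^-}$-measurable. Grouping instead by ranges of $P(n)$, as in Harper's moment argument, gives conditionally higher-degree chaoses, which lack the subgaussian tails needed for a union bound. The quadratic variation of the largest-prime martingale is $\sum_p|\Psi_f'(x/p,p)|^2$. It involves smooth sums at every cutoff $p$, not one chaos mass at a fixed $y$, and this is where the real work lies.

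The paper proceeds as follows:
\begin{enumerate}[label=(\alph*)]
\item It discards primes with $\log x/\log p\gg(\log_2x)^2$ and averages briefly in $p$.
\item It bounds the clock by sums, over windows of $O(\ell)$ consecutive layers of unit length in $\log_2 p$, of energies $\mathcal U_j$. Each $\mathcal U_j$ involves $\sup_u|\Psi_f(z,u)|^2$ over its layer.
\item It controls these by stopping the exactly normalized Parseval energy $Z_y$ at level $a$.
\item It applies an $\ell^2$-valued Doob inequality to get conditional exponential moments layer by layer, and iterates across the window.
\end{enumerate}
A barrier estimate for the mass at one scale, or at a handful of scales, does not touch these suprema over cutoffs.

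\textbf{Your accounting of the exponents is also inconsistent.} With about $e^\ell\asymp\log_2x$ test points per block (e.g.\ $x_k=e^{e^{k/K}}$), consecutive points are far too far apart to interpolate. Doob's inequality in $x$ is unavailable, since $M_f(x)$ is not a martingale in $x$.

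Interpolation (Lau--Tenenbaum--Wu) needs points like $\lfloor\exp(i^{c_0})\rfloor$, which means $\exp(O(\log_2x))$ of them per block. The union bound over these forces (threshold)$^2/$(variance bound)$\asymp\log_2x$. This is the only source of the factor $(\log_2x)^{1/2}$, which, multiplied by the critical scale $(\log_2x)^{-1/4}$ of $\sqrt{V/x}$, produces the exponent $1/4$. If the union really cost only $e^{O(\ell)}$, your argument would give $|M_f(x)|\ll\sqrt x(\log_2x)^{-1/4}(\log_3x)^{O(1)}$, contradicting Harper's lower bound.

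In the paper, the factor $\ell$ in $\ell\log\ell/G(\ell)^2$ comes from the window length $O(\ell)$ in the clock. The $\log\ell$ comes from choosing $q=1-1/\log\ell$ in Harper's bound $\E Z_y^q\ll((1-q)\sqrt{\log_2y})^{-q}$, which already yields your tail $\log\lambda/\lambda$ by Markov. Uniformity over all cutoffs in the block is then immediate from Doob's inequality for the nonnegative supermartingale $Z_y^q$, so no new barrier argument is needed for that part.
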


Because $G$ is eventually nondecreasing, comparison on each interval $[\ell,\ell+1]$ shows that the summability hypothesis is equivalent, up to finitely many terms, to
\begin{equation*}
  \int_3^\infty\frac{t\log t}{G(t)^2}\,dt<\infty.
\end{equation*}

\subsection*{Remark}
This work was initiated following an Aisenstadt Chair Lecture by Adam Harper during the workshop
\emph{Probability in Number Theory} at the Centre de recherches
math\'ematiques.  While this manuscript was being completed, Durkan and
Pearce-Crump independently proved Harper's conjecture
\cite{DurkanPearceCrump}.  Their argument is different from the one given
here, and the present method yields the more precise family of bounds in
\cref{thm:main}.  We compare the two approaches in \cref{sec:proof}.

\subsection{History of the problem}
The first almost sure results go back to Wintner \cite{Wintner}, who proved in the Rademacher model that, for every $\varepsilon>0$,
\begin{equation*}
M_f(x)\ll_{f,\varepsilon}x^{1/2+\varepsilon},
\end{equation*}
while an estimate $M_f(x)=O(x^{1/2-\varepsilon})$ fails almost surely.  Hal\'asz \cite{Halasz} later obtained the much sharper upper bound
\begin{equation*}
M_f(x)\ll_f\sqrt{x}\exp\big(C\sqrt{\log_2x\log_3x}\big)
\end{equation*}
in the same model. Subsequently, Basquin \cite{Basquin} and Lau--Tenenbaum--Wu \cite{LTW} independently proved that for every $\varepsilon>0$,
\begin{equation*}
  M_f(x)\ll_{f,\varepsilon}\sqrt{x}(\log_2x)^{2+\varepsilon}
\end{equation*}
almost surely in the Rademacher model. These results still left open whether $M_f(x)=O_f(\sqrt{x})$ might hold almost surely.  Hal\'asz raised this question in the Steinhaus model, while Erd\H{o}s conjectured in the Rademacher model that such a bound should fail.

A decisive change in the picture came from Harper's determination of all low moments.  Uniformly for $0\leq q\leq1$ and in both models, he proved \cite{HarperLow} that
\begin{equation*}
  \E|M_f(x)|^{2q}
  \asymp
  \Big(\frac{x}{1+(1-q)\sqrt{\log_2x}}\Big)^q.
\end{equation*}
For $q=1$, this agrees with a trivial calculation using orthogonality, but taking $q=1/2$ gives the striking estimate
\begin{equation*}
  \E|M_f(x)|
  \asymp\frac{\sqrt{x}}{(\log_2x)^{1/4}}.
\end{equation*}
Thus the first moment exhibits better than square-root cancellation by a factor $(\log_2x)^{1/4}$, confirming a conjecture of Helson and contradicting the behaviour suggested by the second moment alone.  Harper's proof connected this separation of the low moments to critical multiplicative chaos, a connection that also plays a central role here, as described in \cref{sec:chaos}.

The natural probabilistic benchmark for this problem is the simple random walk. If $(X_n)_n$ is a sequence of independent ``coin flips'' taking the values $+1$ and $-1$ with equal probability, and $S_N=\sum_{n\leq N}X_n$, then $S_N$ has size of order $\sqrt N$ at a fixed time, whereas the law of the iterated logarithm gives
\begin{equation*}
 \limsup_{N\to\infty} \frac{|S_N|}{\sqrt{2N\log_2N}}=1
\end{equation*}
almost surely. In other words, taking the largest fluctuations over time introduces a factor of order $(\log_2N)^{1/2}$ beyond the fixed-time scale.
Harper observed that this gives a compelling heuristic for the almost sure problem:  
if the same amplification is applied to the fixed-$x$ scale suggested by the low moments of $M_f(x)$, one obtains
\begin{equation*}
  \frac{\sqrt{x}}{(\log_2x)^{1/4}}(\log_2x)^{1/2}
  =\sqrt{x}(\log_2x)^{1/4}.
\end{equation*}
The partial sums of a random multiplicative function have neither independent values nor independent increments, but this nevertheless identifies the exponent $1/4$ as the natural candidate for their largest almost sure fluctuations. Harper emphasized, however, that proving such an upper bound would remain ``a formidable task'' \cite{HarperLarge}. We shall discuss the law of the iterated logarithm heuristic further in light of our results in \cref{sec:chaos}.

Harper established the corresponding lower bound \cite{HarperLarge} by showing that for every function $V(x)\to\infty$, almost surely there are arbitrarily large $x$ for which
\begin{equation*}
  |M_f(x)|\ge
  \frac{\sqrt{x}(\log_2x)^{1/4}}{V(x)}.
\end{equation*}
Thus Harper answered Halász’s question negatively and confirmed Erdős’s prediction in the Rademacher model.

Motivated by this lower bound and the above heuristic, the assertion that, in either model and for every $\varepsilon>0$, one almost surely has
\begin{equation*}
  M_f(x)\ll_{f,\varepsilon}
  \sqrt{x}(\log_2x)^{1/4+\varepsilon}
\end{equation*}
has become known as Harper's conjecture.  Mastrostefano \cite{Mastrostefano} proved the conjectured estimate for the contribution from integers having a prime factor larger than $\sqrt{x}$.  This is the part of the sum most directly amenable to conditioning on the smaller primes, since the largest prime factor then occurs only once.  Caich \cite{Caich} was the first to treat the full sum through a martingale indexed by the largest prime factor and obtained, in both models,
\begin{equation*}
  M_f(x)\ll_{f,\varepsilon}
  \sqrt{x}(\log_2x)^{3/4+\varepsilon}
\end{equation*}
almost surely. This reduced the problem to closing the remaining gap in the exponent of $\log_2x$.

\subsection{Corollaries of the main result}
The theorem allows for a range of upper bounds rather than a single fixed loss.

\begin{corollary}
In either model, for every $\varepsilon>0$, almost surely
\begin{equation*}
  |M_f(x)|
  \ll_{f,\varepsilon}
  \sqrt{x}(\log_2x)^{1/4}
  \log_3x\,\log_4x\,(\log_5x)^{1/2+\varepsilon}.
\end{equation*}
Consequently,
\begin{equation*}
  |M_f(x)|
  \le \sqrt{x}(\log_2x)^{1/4}(\log_3x)^{1+o(1)}
\end{equation*}
almost surely as $x\to\infty$.  In particular, Harper's conjecture holds.
\end{corollary}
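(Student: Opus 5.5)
The corollary follows from \cref{thm:main} by a suitable choice of $G$, plus a routine comparison of iterated logarithms. For the first bound I take
\begin{equation*}
  G(t)=t\,\log t\,(\log_2 t)^{1/2+\varepsilon}
\end{equation*}
for $t$ large (say $t\ge e^e$), extended by any positive constant on $[3,e^e)$. This $G$ is eventually nondecreasing, since each factor is increasing for large $t$. The summability hypothesis of \cref{thm:main} then reads
\begin{equation*}
  \sum_{\ell}\frac{\ell\log\ell}{\ell^2(\log\ell)^2(\log_2\ell)^{1+2\varepsilon}}
  =\sum_{\ell}\frac{1}{\ell\log\ell\,(\log_2\ell)^{1+2\varepsilon}}<\infty.
\end{equation*}
This converges by the integral test: the substitution $u=\log_2 t$ turns $\int dt/(t\log t(\log_2t)^{1+2\varepsilon})$ into $\int du/u^{1+2\varepsilon}$. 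Equivalently, one can use the integral form of the hypothesis noted after \cref{thm:main}. Substituting $t=\log_3x$ gives $G(\log_3x)=\log_3x\,\log_4x\,(\log_5x)^{1/2+\varepsilon}$. Hence \cref{thm:main} yields the first display almost surely, with implied constant depending on $f$ and $\varepsilon$ (through $G$).

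For the second statement, fix the choice $\varepsilon=1/2$, so that almost surely
\begin{equation*}
  |M_f(x)|\le C_f\sqrt{x}(\log_2x)^{1/4}\log_3x\,\log_4x\,\log_5x .
\end{equation*}
Since $\log_4x\,\log_5x=(\log_3x)^{o(1)}$ and the random constant $C_f$ is $(\log_3x)^{o(1)}$ as $x\to\infty$, this gives $|M_f(x)|\le\sqrt{x}(\log_2x)^{1/4}(\log_3x)^{1+o(1)}$ almost surely. The $o(1)$ here may depend on the sample point through $C_f$, which is harmless for an almost sure statement.

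Finally, $(\log_3x)^{1+o(1)}\le(\log_2x)^{\varepsilon}$ for all large $x$, for every $\varepsilon>0$, so Harper's conjecture follows. There is no genuine obstacle in this argument. The only care needed is that $G$ is eventually monotone and that the logarithmic series converges; the whole point is the choice of the exponent $1/2+\varepsilon$ on the innermost logarithm.
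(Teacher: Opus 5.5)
Your proposal is correct and follows the paper's proof: you make the same choice $G(t)=t\log t\,(\log_2t)^{1/2+\varepsilon}$, adjusted on a bounded interval, and verify the summability hypothesis of \cref{thm:main} by the same integral test. Your further steps fill in what the paper leaves implicit, and they are correct: you absorb $\log_4x\,\log_5x$ and the random constant into $(\log_3x)^{o(1)}$, and then compare $(\log_3x)^{1+o(1)}$ with $(\log_2x)^{\varepsilon}$ to obtain Harper's conjecture.
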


\begin{proof}
Let $\varepsilon>0$. For all sufficiently large $t$, take
\begin{equation*}
  G(t)=t\log t\,(\log_2t)^{1/2+\varepsilon},
\end{equation*}
and modify $G$ on a bounded interval so that it is nondecreasing. Then
\begin{equation*}
  \frac{\ell\log\ell}{G(\ell)^2}
  =\frac1{\ell\log\ell\,(\log_2\ell)^{1+2\varepsilon}},
\end{equation*}
whose sum converges by the integral test.  
\end{proof}

The displayed choice of $G$ is only a convenient one. By iterating the integral test, for any fixed integer $r\geq 2$, one may instead take
$$
G(t)=t\log t\prod_{j=2}^{r-1}(\log_j t)^{1/2}(\log_r t)^{1/2+\varepsilon}
$$
for all sufficiently large $t$. Thus one may obtain corresponding refinements involving any prescribed finite number of iterated logarithms.

Combining this upper bound with Harper's lower bound, applied with $V(x)=(\log_2x)^{\delta}$ and then with $\delta\to 0$ along a countable sequence, yields the following exact limsup statement for the exponent of $\log_2x$. It also appears in \cite{DurkanPearceCrump}.

\begin{corollary}
For either model, almost surely
\begin{equation*}
  \limsup_{x\to\infty}
  \frac{\log(1+|M_f(x)|/\sqrt{x})}{\log_3x}
  =\frac14.
\end{equation*}
\end{corollary}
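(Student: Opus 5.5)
The statement follows from the first corollary, which gives the upper bound, together with Harper's lower bound \cite{HarperLarge}, which gives the matching lower bound. Write
\[
L(x)=\frac{\log(1+|M_f(x)|/\sqrt{x})}{\log_3x}.
\]
I will show separately that $\limsup_{x\to\infty} L(x)\le 1/4$ and $\limsup_{x\to\infty} L(x)\ge 1/4$ almost surely.

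\emph{Upper bound.} By the first corollary there is almost surely a finite random constant $C_f$ such that
\[
|M_f(x)|\le C_f\sqrt{x}(\log_2x)^{1/4}(\log_3x)^{2}
\]
for all large $x$; any fixed power of $\log_3x$ suffices here. Then
\[
1+|M_f(x)|/\sqrt{x}\le (1+C_f)(\log_2x)^{1/4}(\log_3x)^{2}.
\]
Taking logarithms and dividing by $\log_3x$ gives
\[
L(x)\le \frac14+\frac{2\log_4x+\log(1+C_f)}{\log_3x}.
\]
The second term tends to $0$, so $\limsup L(x)\le 1/4$ almost surely.

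\emph{Lower bound.} Fix $\delta\in(0,1/4)$ and apply Harper's lower bound with $V(x)=(\log_2x)^{\delta}$, which tends to infinity. Almost surely there are arbitrarily large $x$ with
\[
|M_f(x)|/\sqrt{x}\ge(\log_2x)^{1/4-\delta}.
\]
At those $x$ we have
\[
\log\bigl(1+|M_f(x)|/\sqrt{x}\bigr)\ge(1/4-\delta)\log_3x,
\]
so $L(x)\ge 1/4-\delta$. Hence $\limsup L(x)\ge 1/4-\delta$ on an event of probability one.

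Now run $\delta=1/m$ over the integers $m\ge5$. The intersection of these countably many probability-one events still has probability one, and on it $\limsup L(x)\ge 1/4$. Intersecting with the almost sure event from the upper bound gives $\limsup_{x\to\infty} L(x)=1/4$ almost surely.

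The only point needing care is in the lower bound. Harper's statement is "for every $V$, almost surely", so the exceptional null set depends on $V$. This is why $\delta$ must be restricted to a countable sequence before taking the intersection. Otherwise the argument is routine, since the real content lies in \cref{thm:main} and Harper's theorem.
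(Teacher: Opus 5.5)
Your proposal is correct and follows the paper's own argument. The upper bound comes from the first corollary, where any fixed power of $\log_3x$ suffices, and the lower bound comes from Harper's theorem with $V(x)=(\log_2x)^{\delta}$, letting $\delta\to0$ along a countable sequence to handle the $V$-dependent null sets.
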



\subsection{Proof strategy and comparison with earlier work} \label{sec:proof}
The proof combines three principal ingredients from earlier work.  We use the interpolation lemma of Lau--Tenenbaum--Wu \cite{LTW} to reduce the problem to a sparse collection of test points in macroblocks. Following the largest prime factor approach developed by Caich \cite{Caich}, we then isolate a main term that is a martingale as the primes are revealed.
After a short averaging in the prime variable, its quadratic variation (or clock) is controlled by consecutive sums of normalized $L^2(dz/z^2)$-energies of smooth partial sums.
Standard estimates for smooth numbers then show that only a consecutive window of $O(\log_3x)$ prime layers can contribute substantially.

To control this window, we use Harper's critical chaos estimates from \cite{HarperLow} through an exact identity supplied by Parseval's formula for Dirichlet series. The resulting normalized whole-line energy is a nonnegative mean-one martingale in the prime cutoff.  We stop this energy at a single threshold throughout each macroblock. An $\ell^2$-valued Doob inequality then turns the stopped terminal energy bound into conditional exponential moments for the maximal energy of each layer.  These estimates may be iterated across consecutive layers without assuming any independence between them. The main new point is to control the relevant window as a whole, rather than estimating its layers separately and then accumulating the resulting losses. This organization also permits coarser prime layers of bounded reciprocal-prime mass and considerably simplifies the central estimate for the quadratic variation.

As mentioned above, Durkan and Pearce-Crump have independently proved Harper's conjecture \cite{DurkanPearceCrump}.
Their argument also combines stopping with higher-moment estimates, but
retains Caich's finer prime-block decomposition and uses a deterministically
tilted supermartingale together with scalar maximal inequalities,
hypercontractivity, and divisor weights.  Here the stopped object is instead
an exactly normalized Parseval energy, and the vector-valued maximal
estimate feeds directly into the consecutive window bound.  This both
simplifies the central clock estimate and yields the full family of bounds
in \cref{thm:main}.  We do not know whether their method can be adapted to
recover our stronger conclusion.

\subsection*{Remark}
Some of the techniques used in this proof are strikingly similar to those in a forthcoming work of the author on the Fourier dimension of Gaussian multiplicative chaos. This is perhaps unsurprising, since partial sums of random multiplicative functions are expected to behave in a similar way as the Fourier coefficients of Gaussian multiplicative chaos \cite{GarbanVargas}, and similarly for other random models such as holomorphic multiplicative chaos \cite{NajnudelPaquetteSimm}.

\subsection{Critical chaos and remarks on a law of the iterated logarithm} \label{sec:chaos}

Critical chaos enters the proof naturally through the quadratic variation of the largest prime martingale. More precisely, we 
compare it with the total mass of a normalized density built from the squared modulus of the truncated Euler product $F_y$.
The reason this Euler product mass is critical can be seen most directly in the Steinhaus model.  The leading part of $\log|F_y(1/2+it)|$ is a one-dimensional log-correlated field, and the normalized squared modulus of the Euler product has the
structure of a critical, approximately Gaussian multiplicative chaos (see, e.g., \cite{RhodesVargas,Berestycki} for more on Gaussian multiplicative chaos). This critical chaos picture, already identified by Harper in \cite{HarperLow}, has proved fruitful in many recent works on random multiplicative functions.
The gain from this point of view comes from the critical chaos phenomenon that 
the normalized mass is generally much smaller than its mean, with that mean sustained by rare large values. A key ingredient in our refinement is that we make the critical chaos mechanism explicit through an exactly mean-normalized energy and exploit Harper's fractional moment estimates uniformly as the moment order approaches $1$ from below.

While the exponent $1/4$ of $\log_2x$ is sharp, our result does not determine the precise almost sure order of $M_f(x)$, and Harper's lower bound does not determine any corresponding secondary scale.
The critical chaos interpretation suggests that this remaining question may be subtler than the classical law of the iterated logarithm heuristic alone would indicate. Very roughly, in our proof the quadratic variation relevant to a given $x$ receives contributions from a window of order $\log_3x$ prime layers, each having critical energy scale $(\log_2x)^{-1/2}$. If these contributions accumulate on their natural scale, this would suggest fluctuations of order
\begin{equation*}
\sqrt{x}(\log_2x)^{1/4}(\log_3x)^{1/2},
\end{equation*}
rather than merely $\sqrt{x}(\log_2x)^{1/4}$.  This heuristic is not a conjecture, since correlations between the layers and the exceptional peaks of the critical energy could alter the secondary normalization substantially.  However, it does indicate that the present power $1+o(1)$ of $\log_3x$ need not be optimal.

There is a related reason to be cautious about expecting a classical law of the iterated logarithm with a deterministic constant. The martingale in our proof evolves according to a random quadratic clock governed by critical chaos. The mean of the relevant chaos mass is sustained by rare large values rather than by concentration around a deterministic scale. It is therefore possible that an appropriate normalization of the partial sums might involve the random quadratic variation itself, or a deterministic secondary correction reflecting the critical-chaos scale.


\section{Preliminaries}

\subsection{Notation}
Let $f:\mathbb N \to \mathbb C$ be either a Steinhaus or Rademacher random multiplicative function.
We write $P(n)$ for the largest prime factor of $n$, with $P(1)=1$.  For
$u,y>0$, define the following partial sums of $f$ restricted to $y$-smooth integers:
\begin{equation*}
  \Psi_f(u,y)=\sum_{\substack{n\le u\\P(n)\le y}}f(n),
  \qquad
  \Psi_f'(u,y)=\sum_{\substack{n\le u\\P(n)<y}}f(n).
\end{equation*}
Both sums are zero when $u<1$.  We use the prime filtrations
\begin{equation*}
  \F_y=\sigma(f(p):p\le y),
  \qquad
  \F_{p^-}=\sigma(f(q):q<p).
\end{equation*}
Note that $\F_y$ includes the prime at
the endpoint when $y$ itself is prime.  Between consecutive primes, the
filtration and all processes indexed by a prime cutoff are taken to
be constant.

For variables $a$ and $b$, we write $a \ll b$ or $a = O(b)$ to say that there exists an absolute positive constant $C$ such that $|a| \leq C|b|$. If the constant $C$ depends on a parameter, say $k$, we shall write $a \ll_k b$ or $a = O_k(b)$. We also write $a\asymp b$ if both $a\ll b$ and $b\ll a$ hold. If $a$ and $b$ depend on a positive parameter $x$, then we say that $a=o(b)$ (as $x \rightarrow \infty$) if the ratio $a/b$ converges to $0$ as $x \rightarrow \infty$.

Indicator functions for a set $A$ will always be indicated by $\1_A$, while its cardinality will be denoted either $\#A$ or $|A|$. The variable $p$ will be reserved for prime numbers. Finally, we write $\exp(x)$ instead of $e^x$ as soon as the exponential has more than one level or contains complicated expressions.


\subsection{Probabilistic tools}
We refer to any textbook on graduate probability such as \cite{Durrett} for the elementary facts and inequalities used below.

We first record two martingale estimates in the forms needed below. The first is a direct specialization of Caich's stopped Hoeffding estimate.

\begin{lemma}\label[lemma]{lem:subgaussian}
Let $(\mathcal G_k)_{0\le k\le N}$ be a filtration.  Suppose that $\xi_k$ is
$\mathcal G_k$-measurable and, conditionally on $\mathcal G_{k-1}$, is either
a symmetric sign or a uniform point on the unit circle, and that $a_k$ is a
complex-valued $\mathcal G_{k-1}$-measurable random variable.  Put
\begin{equation*}
  S_N=\sum_{k=1}^N\xi_ka_k,
  \qquad
  V_N=\sum_{k=1}^N|a_k|^2.
\end{equation*}
Then, for every $t,v>0$,
\begin{equation*}
  \Prob(|S_N|>t,\ V_N\le v)\le2\exp\Big(\frac{-t^2}{10v}\Big).
\end{equation*}
\end{lemma}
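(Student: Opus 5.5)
The plan is the standard exponential supermartingale argument. We stop the process at the first time the accumulated variance would exceed $v$, so the event $\{V_N\le v\}$ is absorbed into a process of bounded quadratic variation. We then apply Markov's inequality to an exponential of a real linear functional of $S_N$.

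First we record the conditional sub-Gaussian bound. For $\xi$ a symmetric sign or a uniform point on the unit circle, independent of a fixed complex number $a$, and for real $\lambda$ and a unit complex number $\theta$, we have $\E\exp(\lambda\,\mathrm{Re}(\theta\xi a))\le\exp(\lambda^2|a|^2/2)$. For signs this is $\cosh(\lambda\,\mathrm{Re}(\theta a))\le e^{\lambda^2|a|^2/2}$. For the circle, $\mathrm{Re}(\theta\xi a)$ has the law of $|a|\cos U$, and $\E e^{s\cos U}=I_0(s)\le e^{s^2/4}$; this comparison holds termwise in the power series. Since $\xi_k$ has this conditional law given $\mathcal G_{k-1}$ and $a_k$ is $\mathcal G_{k-1}$-measurable, we get
\begin{equation*}
\E\bigl[\exp(\lambda\,\mathrm{Re}(\theta\xi_ka_k))\mid\mathcal G_{k-1}\bigr]\le\exp(\lambda^2|a_k|^2/2).
\end{equation*}

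Next we localize. Set $\tilde a_k=a_k\1\{\sum_{j\le k}|a_j|^2\le v\}$. The indicator is $\mathcal G_{k-1}$-measurable, so $\tilde a_k$ is still predictable. Moreover $\sum_k|\tilde a_k|^2\le v$ always, and on $\{V_N\le v\}$ we have $\tilde S_N:=\sum\xi_k\tilde a_k=S_N$. The process
\begin{equation*}
Z_n=\exp\Big(\lambda\,\mathrm{Re}(\theta\tilde S_n)-\tfrac{\lambda^2}{2}\sum_{k\le n}|\tilde a_k|^2\Big)
\end{equation*}
is a supermartingale with $Z_0=1$, so $\E Z_N\le1$. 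Since the variance sum is at most $v$, this gives $\E\exp(\lambda\,\mathrm{Re}(\theta\tilde S_N))\le e^{\lambda^2v/2}$. Markov's inequality with the optimal choice $\lambda=s/v$ then yields $\Prob(\mathrm{Re}(\theta\tilde S_N)>s)\le e^{-s^2/(2v)}$.

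Finally we pass from one direction to the modulus, which is the only mildly delicate step. Take $\theta\in\{1,-1,i,-i\}$. If $|\tilde S_N|>t$, then either $|\mathrm{Re}\,\tilde S_N|>t/\sqrt2$ or $|\mathrm{Im}\,\tilde S_N|>t/\sqrt2$, so
\begin{equation*}
\Prob(|S_N|>t,\ V_N\le v)\le4e^{-t^2/(4v)}.
\end{equation*}
To reach the stated constant $2$, we use that $4e^{-t^2/(4v)}\le2e^{-t^2/(10v)}$ whenever $t^2/v\ge(20/3)\log2$. In the remaining range $t^2/v<(20/3)\log 2$ we have $2e^{-t^2/(10v)}>2\cdot2^{-2/3}>1$, so the bound is trivial there. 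In the Rademacher (real) case one may instead use only $\theta=\pm1$ directly. The constant $10$ is generous, and this bookkeeping is the only place where care is needed; alternatively, the lemma follows verbatim from Caich's stopped Hoeffding estimate, of which it is a specialization.
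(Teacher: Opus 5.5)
Your proof is correct and follows essentially the same route as the paper: truncate the coefficients predictably so that their squared sum is at most $v$ deterministically and the truncated sum equals $S_N$ on $\{V_N\le v\}$, then apply an exponential martingale bound. The only differences are that the paper uses the greedy budget indicator $\eta_k=\1_{\{Q_{k-1}+|a_k|^2\le v\}}$ rather than your genuine stopping rule $\1\{V_k\le v\}$, and that it cites Caich's Lemma~3.11 instead of reproving the sub-Gaussian supermartingale estimate and the bookkeeping that turns $4e^{-t^2/(4v)}$ into $2e^{-t^2/(10v)}$, as you do.
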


\begin{proof}
Set
\begin{equation*}
  Q_0=0,\qquad
  \eta_k=\1_{\{Q_{k-1}+|a_k|^2\le v\}},\qquad
  Q_k=Q_{k-1}+\eta_k|a_k|^2.
\end{equation*}
The variables $\eta_k\xi_ka_k$ form a complex martingale difference sequence with predictable bounds $\eta_k|a_k|$, and $\sum_k\eta_k|a_k|^2\le v$. Thus \cite[Lemma~3.11]{Caich} gives
\begin{equation*}
\Prob\Big(\Big|\sum_{k=1}^N\eta_k\xi_ka_k\Big|>t\Big)
  \le2\exp(-t^2/(10v)).
\end{equation*}
On $\{V_N\le v\}$ one has $\eta_k=1$ for every $k$, which proves the claim.
\end{proof}

We shall also use the following $\ell^2$-valued extension of Doob's
maximal inequality.  

\begin{lemma}\label[lemma]{lem:vector-doob}
Let $(\mathcal G_k)_{0\le k\le N}$ be a filtration, let $r>1$, and let
$(X_m)_m$ be a finite family in $L^{2r}$.  For nonnegative weights $w_m$,
\begin{equation*}
 \Big\|
  \Big(\sum_mw_m\sup_{0\le k\le N}
   |\E[X_m\mid\mathcal G_k]|^2\Big)^{1/2}
 \Big\|_{L^{2r}}
 \ll \sqrt r\,
 \Big\|\Big(\sum_mw_m|X_m|^2\Big)^{1/2}\Big\|_{L^{2r}}.
\end{equation*}
The implied constant is absolute.
\end{lemma}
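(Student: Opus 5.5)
This will follow from the Burkholder–Davis–Gundy / Burkholder square-function route applied to the $\ell^2(w)$-valued martingale $k\mapsto(\E[X_m\mid\mathcal G_k])_m$, with Doob's inequality used only in its scalar form. The obstacle is that Doob applied coordinatewise gives no control of $\sum_m w_m\sup_k|\cdot|^2$ in terms of the $L^{2r}$ norm of the square function unless we exploit convexity. So the plan is to reduce to the dual, or ``Fefferman--Stein'', form of Doob's inequality.

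Put $Y_m=|X_m|^2\ge0$ and $M_m^*=\sup_k|\E[X_m\mid\mathcal G_k]|^2$. By conditional Jensen (Cauchy--Schwarz), $|\E[X_m\mid\mathcal G_k]|^2\le \E[Y_m\mid\mathcal G_k]$. Hence
\begin{equation*}
\sum_m w_mM_m^*\le \sum_m w_m\sup_k\E[Y_m\mid\mathcal G_k].
\end{equation*}
Squaring the desired inequality, it suffices to show the following, with $p=r>1$:
\begin{equation*}
\Big\|\sum_m w_m\sup_k\E[Y_m\mid\mathcal G_k]\Big\|_{L^r}\ll r\,\Big\|\sum_m w_mY_m\Big\|_{L^r}.
\end{equation*}
This is the martingale analogue of the Fefferman--Stein $\ell^1$-valued maximal inequality for nonnegative functions. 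Taking square roots then gives the factor $\sqrt r$.

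To prove it, argue by duality. Pick $g\ge0$ with $\|g\|_{L^{r'}}=1$, and choose stopping indices $\tau_m$, measurable selections attaining the sup up to $\varepsilon$; each $\tau_m$ may be taken as a random index $k_m(\omega)$. Then
\begin{equation*}
\E\Big[g\sum_m w_m\E[Y_m\mid\mathcal G_{k_m}]\Big]\le \sum_m w_m\E\big[Y_m\,g^*\big],
\end{equation*}
where $g^*=\sup_k\E[g\mid\mathcal G_k]$. This holds because for a fixed random index the pairing equals $\sum_k\E[\1_{\{k_m=k\}}g\,\E[Y_m\mid\mathcal G_k]]=\sum_k\E[Y_m\,\E[\1_{\{k_m=k\}}g\mid\mathcal G_k]]$. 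Each term $\E[\1_{\{k_m=k\}}g\mid\mathcal G_k]$ is at most $\E[g\mid\mathcal G_k]\le g^*$ in total over $k$ only if $\{k_m=k\}$ is $\mathcal G_k$-measurable. So I instead bound directly: $\E[g\,\E[Y_m\mid\mathcal G_k]]$ summed against a partition is handled by writing $\sup_k\E[Y_m\mid\mathcal G_k]\le$ the maximal function and using the standard stopping-time argument. Concretely, one takes $k_m$ to be the first $k$ at which $\E[Y_m\mid\mathcal G_k]$ exceeds a level, which makes it a stopping time. Doing this over dyadic levels costs only a constant factor. With stopping times, optional sampling gives exactly $\E[g\,\E[Y_m\mid\mathcal G_{k_m}]]=\E[Y_m\E[g\mid\mathcal G_{k_m}]]\le\E[Y_mg^*]$.

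Summing over $m$ and applying Hölder gives
\begin{equation*}
\E\Big[\Big(\sum_m w_mY_m\Big)g^*\Big]\le\Big\|\sum_m w_mY_m\Big\|_r\,\|g^*\|_{r'}.
\end{equation*}
Scalar Doob gives $\|g^*\|_{r'}\le r\|g\|_{r'}=r$, since $(r')'=r$. This yields the claim with constant $\asymp r$, hence $\sqrt r$ after the square root.

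The main obstacle is the level-set/stopping-time bookkeeping in the duality step: making the supremum over $k$ attained along genuine stopping times so that optional sampling applies. The remedy I would try first is a layer-cake decomposition $\sup_k Z_k\le 2\sum_j 2^j\1_{\{\sup_kZ_k>2^j\}}$, applied with $Z_k=\E[Y_m\mid\mathcal G_k]$, using the hitting time of $2^j$ for each $j$. Alternatively, one can sidestep this entirely by citing the known Stein/Fefferman--Stein inequality for martingales in the case of $\ell^1$-valued nonnegative sequences.
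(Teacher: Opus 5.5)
There is a genuine gap: you reduce the lemma to an inequality that is false, so no stopping-time bookkeeping can prove it. The bound
\[
\Big\|\sum_m w_m\sup_k\E[Y_m\mid\mathcal G_k]\Big\|_{L^r}\ll r\,\Big\|\sum_m w_mY_m\Big\|_{L^r},\qquad Y_m=|X_m|^2,
\]
is the martingale form of the Fefferman--Stein inequality with $q=1$, and it fails.

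Take the dyadic filtration on $[0,1)$, with $\mathcal G_k$ generated by the dyadic intervals of length $2^{-k}$ for $0\le k\le N$. Let $X_m=Y_m=\1_{I_m}$ for the $2^N$ dyadic intervals $I_m$ of length $2^{-N}$, and let $w_m=1$. Then $\sum_mY_m\equiv1$. At each point $x$, the interval containing $x$ contributes $1$ to $\sum_m\sup_j\E[Y_m\mid\mathcal G_j](x)$. For each $0\le k<N$, exactly $2^{N-k-1}$ further indices contribute $2^{k-N}$. So the left side is identically $1+N/2$, while the right side is $\ll r$.

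The same defect shows up in your duality step. With $g\equiv1$, the per-coordinate bound $\E[g\sup_k\E[Y_m\mid\mathcal G_k]]\ll\E[Y_mg^*]$ is Doob's inequality in $L^1$. The layer-cake over dyadic levels costs the number of levels crossed (the usual $L\log L$ loss), not a constant. Citing an ``$\ell^1$-valued Stein/Fefferman--Stein inequality'' does not rescue this either. What is true in $\ell^1$ is the dual Doob inequality $\|\sum_k\E[f_k\mid\mathcal G_k]\|_{L^p}\le p\|\sum_kf_k\|_{L^p}$, which has one fixed $\sigma$-algebra per summand and says nothing about a supremum over $k$ inside each summand.

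The loss comes from applying conditional Jensen at every intermediate time before taking the supremum. In the example, $|\E[X_m\mid\mathcal G_k]|^2=2^{2(k-N)}$ whereas $\E[|X_m|^2\mid\mathcal G_k]=2^{k-N}$. Only the former gives a summable geometric series: the $\ell^2$ sum is at most $3/2$.

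The paper keeps the square inside the supremum. For each $m$ it uses the weighted (Fefferman--Stein type) Doob inequality at exponent $2$,
\[
\E\Big[h_N\sup_k|\E[X_m\mid\mathcal G_k]|^2\Big]\le C\,\E\big[h^*|\E[X_m\mid\mathcal G_N]|^2\big],\qquad h_N=\E[h\mid\mathcal G_N],\quad h^*=\sup_k\E[h\mid\mathcal G_k],
\]
with $C$ absolute. Here $h$ may be replaced by $h_N$ on the left because the supremum is $\mathcal G_N$-measurable. Only then does it apply conditional Jensen, at the terminal time $N$, to replace $|\E[X_m\mid\mathcal G_N]|^2$ by $|X_m|^2$.

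With that single change, your remaining steps go through as you wrote them: sum in $m$, apply H\"older, and use scalar Doob $\|h^*\|_{L^{r'}}\le r$. This gives the factor $r$, hence $\sqrt r$ after taking square roots.
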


\begin{proof}
The general $\ell^q$-extension of Doob's inequality is standard and known in much more generality (see, e.g., \cite[Theorem~3.2.7]{HNVW}).  We include a short duality argument, available due to the Hilbert structure of this special case, in order to record the dependence on $r$.

Write $M_m^*=\sup_k|\E[X_m\mid\mathcal G_k]|$ and
$S=(\sum_mw_m|X_m|^2)^{1/2}$.  By duality in $L^r$, take a nonnegative
$h$ with $\|h\|_{L^{r'}}=1$, and put $h_N=\E[h\mid\mathcal G_N]$ and
\begin{equation*}
 h^*=\sup_{0\le k\le N}\E[h_N\mid\mathcal G_k]
     =\sup_{0\le k\le N}\E[h\mid\mathcal G_k].
\end{equation*}
Since $(M_m^*)^2$ is $\mathcal G_N$-measurable, the weighted Doob inequality at exponent $2$ \cite[Theorem~3.2.3]{HNVW}, followed by conditional Jensen, gives
\begin{equation*}
 \E h(M_m^*)^2
 =\E h_N(M_m^*)^2
 \le4\E h^*|\E[X_m\mid\mathcal G_N]|^2
 \le4\E h^*|X_m|^2.
\end{equation*}
Multiplying the preceding inequality by $w_m$, summing in $m$, applying H\"older's inequality, and then using scalar Doob in $L^{r'}$, we obtain
\begin{equation*}
 \E h\sum_mw_m(M_m^*)^2
 \le4\E h^*S^2
 \le4\|h^*\|_{L^{r'}}\|S^2\|_{L^r}
 \le4r\|S^2\|_{L^r}.
\end{equation*}
Taking square roots proves the result.  The same argument applies to complex variables because the absolute value of a complex martingale is a nonnegative submartingale.
\end{proof}

\subsection{Number theoretic tools}
For the classical estimates from number theory used below, we refer to \cite{MontgomeryVaughan}.

\begin{lemma}\label[lemma]{lem:arithmetic}
The following estimates hold.
\begin{enumerate}[label=\textnormal{(\roman*)}]
\item Uniformly for $y\ge3$,
\begin{equation*}
  \sum_{p\le y}\frac1p=\log_2y+O(1).
\end{equation*}
In particular,
\begin{equation*}
  \prod_{p\le y}(1+1/p)\asymp\log y,
  \qquad
  \prod_{p\le y}(1-1/p)^{-1}\asymp\log y.
\end{equation*}

\item If $b$ is sufficiently large and $2\le H\le b^{1/2}$, then
\begin{equation*}
  H\sum_{b/(1+1/H)<p\le b}\frac1p\ll\frac1{\log b}.
\end{equation*}

\item There is an absolute constant $C$ such that, for $z\ge1$ and $y\ge2$,
\begin{equation*}
  \#\{n\le z:P(n)\le y\}
  \ll z(\log y)^C
  \exp\big(-\frac{\log z}{2\log y}\big).
\end{equation*}
\end{enumerate}
\end{lemma}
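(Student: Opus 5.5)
The three estimates in \cref{lem:arithmetic} are classical, so the plan is to derive each from standard facts in \cite{MontgomeryVaughan} rather than prove anything new.

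For (i), we quote Mertens' second theorem, $\sum_{p\le y}1/p=\log_2y+M+O(1/\log y)$, which gives the first claim uniformly for $y\ge3$. For the products we take logarithms. Since $\log(1\pm1/p)=\pm1/p+O(1/p^2)$ and $\sum_p1/p^2$ converges, both $\sum_{p\le y}\log(1+1/p)$ and $-\sum_{p\le y}\log(1-1/p)$ equal $\log_2y+O(1)$. Exponentiating yields $\asymp\log y$. Alternatively, the second product is exactly Mertens' third theorem.

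For (ii), the interval $(b/(1+1/H),b]$ has length $b/(H+1)\ge b/(2H)\ge \sqrt b/2$, because $H\le b^{1/2}$. We then apply the Brun--Titchmarsh inequality, $\pi(X+Y)-\pi(X)\ll Y/\log Y$ for $Y\ge2$. Here $\log Y\ge\frac12\log b-O(1)\gg\log b$, so the number of primes in the interval is $\ll b/(H\log b)$. Each such prime satisfies $1/p\le 2/b$, since $p>b/2$. Multiplying by $H$ gives $\ll 1/\log b$. This requires $b$ sufficiently large so that $Y\ge2$, which is the stated hypothesis. We need Brun--Titchmarsh rather than the prime number theorem because $H$ may be as large as $\sqrt b$, which falls outside the range where PNT in short intervals is available.

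For (iii), we use Rankin's trick. With $\sigma=1-1/(2\log y)$, we bound
\begin{equation*}
\#\{n\le z:P(n)\le y\}\le\sum_{P(n)\le y}(z/n)^{\sigma}=z^{\sigma}\prod_{p\le y}(1-p^{-\sigma})^{-1}.
\end{equation*}
The prefactor is $z^{\sigma}=z\exp(-\log z/(2\log y))$. For $p\le y$ we have $p^{1-\sigma}=\exp(\log p/(2\log y))\le e^{1/2}$, hence $p^{-\sigma}\le e^{1/2}/p$. The terms with $p^{-\sigma}$ close to $1$, namely small $p$ such as $p=2,3$, contribute a bounded factor, since $\sigma\ge1-1/(2\log2)>0$ keeps $1-p^{-\sigma}$ bounded away from $0$ (for $p\ge2$, $2^{-\sigma}<1$ uniformly). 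For the remaining primes, $-\log(1-p^{-\sigma})\le e^{1/2}/p+O(p^{-2\sigma})$. Summing over $p\le y$ with (i) gives $\exp(e^{1/2}\log_2y+O(1))\ll(\log y)^{C}$ with $C=e^{1/2}$. One caveat is that $\sum p^{-2\sigma}$ converges only when $2\sigma>1$. This holds for $y\ge3$, where $\sigma\ge1-1/(2\log 3)>1/2$. For $y\in[2,3)$ the count is that of powers of $2$ up to $z$, which is $\ll\log z$ and is dominated by the right side.

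The only mildly delicate point is this uniformity at small $y$ in (iii). It is handled either by the separate treatment of $y<3$ just described, or by enlarging the constant $C$.
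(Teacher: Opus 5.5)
Your proposal is correct and follows essentially the same route as the paper: Mertens' theorem for (i), Brun--Titchmarsh on an interval of length $b/(H+1)\gg b^{1/2}$ for (ii), and Rankin's trick with $\sigma=1-1/(2\log y)$ and $p^{-\sigma}\le e^{1/2}/p$ for (iii). The only difference is cosmetic: the paper bounds $p^{-2\sigma}\le e/p^2$ for $p\le y$, so the prime-square contribution is uniformly bounded without your case split at $y=3$. Also, for $y\in[2,3)$ the count of powers of $2$ up to $z$ is $\ll 1+\log z$ rather than $\ll\log z$, though this is still dominated by the right-hand side.
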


\begin{proof}
Part~\textnormal{(i)} follows from Mertens' theorem.
For part~\textnormal{(ii)}, the interval has length
\begin{equation*}
  h=b-\frac{b}{1+1/H}=\frac{b}{H+1}.
\end{equation*}
Since $H\le b^{1/2}$, one has $h\gg b^{1/2}$ and $\log h\asymp\log b$.  The Brun--Titchmarsh theorem 
gives $O(b/(H\log b))$ primes in the interval, and $1/p\asymp1/b$ there.

For part~\textnormal{(iii)}, Rankin's argument
with $\sigma=1-1/(2\log y)$ gives
\begin{equation*}
 \#\{n\le z:P(n)\le y\}
 \le z^\sigma\prod_{p\le y}(1-p^{-\sigma})^{-1}.
\end{equation*}
Since $p^{-\sigma}\le e^{1/2}/p$ and $p^{-2\sigma}\le e/p^2$ for $p\le y$, Mertens' theorem and the convergent contribution of prime powers show that the product is $O((\log y)^C)$.  Also
$z^\sigma=z\exp(-\log z/(2\log y))$.
\end{proof}

\subsection{Reduction to test points by interpolation}

The following is Lau--Tenenbaum--Wu's interpolation lemma in the Rademacher
model, together with its Steinhaus extension.

\begin{lemma}\label[lemma]{lem:sparse-grid}
For every fixed $A>0$, there is $c_0=c_0(A)\in(0,1)$ such that, for
\begin{equation*}
  x_i=\lfloor\exp(i^{c_0})\rfloor,
\end{equation*}
one has almost surely
\begin{equation*}
  \max_{x_{i-1}<x\le x_i}
  |M_f(x)-M_f(x_{i-1})|
  \ll_{f,A}\frac{\sqrt{x_i}}{(\log x_i)^A}.
\end{equation*}
This holds in both models.
\end{lemma}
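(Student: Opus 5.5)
The natural route is the Lau--Tenenbaum--Wu scheme: a union bound over a sparse family of intervals, with a fourth-moment (or higher even moment) estimate for short sums of $f$, followed by Borel--Cantelli. The increment $M_f(x)-M_f(x_{i-1})$ for $x_{i-1}<x\le x_i$ is a sum of $f(n)$ over $n$ in an interval of length $x_i-x_{i-1}\asymp x_i\, i^{c_0-1}$. Since $\log x_i\asymp i^{c_0}$, this length is about $x_i/(\log x_i)^{(1-c_0)/c_0}$. Taking $c_0$ small makes this a tiny fraction of $x_i$, beating any fixed power of $\log x_i$.

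To control the maximum over $x$ inside the interval, I would use a dyadic (chaining) decomposition. Write $(x_{i-1},x]$ as a union of at most $O(\log x_i)$ dyadic subintervals, drawn from a fixed family of sub-intervals of $(x_{i-1},x_i]$ of lengths $(x_i-x_{i-1})2^{-j}$. By Cauchy--Schwarz, the maximum is at most $\sqrt{J}$ times the square root of $\sum_j\max_{I\text{ at level }j}|\sum_{n\in I}f(n)|^2$. It therefore suffices to bound, for each level $j$ and each dyadic interval $I$, the moment $\E|\sum_{n\in I}f(n)|^{4}$, or a higher even moment $2k$ with $k$ fixed.

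For the Steinhaus model, $\E|\sum_{n\in I}f(n)|^{2k}$ counts solutions of $n_1\cdots n_k=m_1\cdots m_k$ with all variables in $I$. For an interval of length $L\ge x^{\delta}$ near $x$, this count is $\ll L^k(\log x)^{C_k}$ by divisor bounds. In the Rademacher model the count is even smaller (one restricts to squarefree variables with products that are squares up to matching). Hence
\begin{equation*}
\Prob\Big(\Big|\sum_{n\in I}f(n)\Big|>\lambda\Big)\le \lambda^{-2k}|I|^k(\log x_i)^{C_k}.
\end{equation*}
Choose $\lambda=\sqrt{x_i}(\log x_i)^{-A-1}$ and sum over the $2^j$ intervals at level $j$, where $|I|=(x_i-x_{i-1})2^{-j}$. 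This gives
\begin{equation*}
2^{j(1-k)}\,(x_i-x_{i-1})^k\,x_i^{-k}(\log x_i)^{2k(A+1)+C_k}\ll 2^{-j}\,i^{-k(1-c_0)}\,i^{c_0(2kA+C'_k)}.
\end{equation*}
Summing over $j$ and then over $i$ converges once $k\ge2$ and $c_0$ is small enough in terms of $A$ and $k$, say $k=2$ and $c_0<1/(4A+C+2)$. For very short intervals with $|I|<x_i^{\delta}$, I would instead use the trivial bound $|I|$. So the dyadic levels can be stopped once the length drops below $\sqrt{x_i}(\log x_i)^{-A-2}$, leaving only $O(\log x_i)$ levels.

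Borel--Cantelli then gives almost surely, for all large $i$, every dyadic block bounded by $\sqrt{x_i}(\log x_i)^{-A-1}$. Summing $O(\log x_i)$ blocks plus the trivial tail yields the claimed bound, after renaming $A$. The main obstacle is the uniform divisor-type bound for the fourth moment of a short-interval sum. It requires that the number of solutions of $n_1n_2=m_1m_2$ in an interval of length $L$ near $x$ be $\ll L^2(\log x)^{C}$ uniformly for $L\ge x^{\delta}$. I would prove this via $n_1n_2=m_1m_2$ with the parametrization $n_1=ab$, $n_2=cd$, $m_1=ac$, $m_2=bd$, counting with $\tau$-weights and using that the diagonal dominates. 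If this proves delicate, I would simply cite the Lau--Tenenbaum--Wu estimate in the Rademacher case and note that the Steinhaus case has the same combinatorics with fewer constraints.
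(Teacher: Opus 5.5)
Your plan is correct in outline and follows essentially the paper's route. The paper also runs the Lau--Tenenbaum--Wu subdivision and Borel--Cantelli argument driven by a fourth-moment bound, citing Lau--Tenenbaum--Wu for the Rademacher model and verifying the fourth moment directly in the Steinhaus model. Your union-bound bookkeeping is fine, including the choice of $c_0$ and the stopping level.

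\textbf{The Steinhaus estimate.} You ask for more than the paper does, namely $\mathbb E|\sum_{n\in I}f(n)|^4\ll|I|^2(\log x)^C$ on short intervals near $x$. This is true, and there are two quick ways to see it.
\begin{enumerate}[label=\textnormal{(\roman*)}]
\item Combine the paper's inequality $\mathbb E|S|^4\le(\sum_{n\in I}d(n))^2$ with the hyperbola bound $\sum_{n\in I}d(n)\ll|I|\log x+\sqrt x$. This suffices because your smallest blocks have length at least $\sqrt{x_i}(\log x_i)^{-A-2}$.
\item Your parametrization $n_1=ab$, $n_2=cd$, $m_1=ac$, $m_2=bd$ also works. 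If both $b\ne c$ and $a\ne d$, then $a,b,c,d<|I|$, which forces $|I|^2>x$. So only diagonal solutions occur when $|I|\le\sqrt x$.
\end{enumerate}
The paper instead uses H\"older to get the weaker bound $v^{2/3}(v-u)^{4/3}(\log v)^{14/3}$. This needs only the global moment $\sum_{n\le v}d(n)^3$ and avoids short-interval divisor sums. Any exponent $>1$ on $v-u$ is enough for your chaining: with $4/3$, the level-$j$ sum decays like $2^{-j/3}$, and the $i$-sum still converges for small $c_0$.

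\textbf{The Rademacher justification is wrong.} There the fourth moment counts squarefree quadruples with $n_1n_2n_3n_4$ a perfect square. This is a different Diophantine condition from $n_1n_2=m_1m_2$. For example, $n_1=p_{12}p_{13}p_{14}$, $n_2=p_{12}p_{23}p_{24}$, $n_3=p_{13}p_{23}p_{34}$, $n_4=p_{14}p_{24}p_{34}$ has square product, yet no pairing $n_in_j=n_kn_l$ holds. So the Rademacher count is not a subcount of the Steinhaus one, and the claim that it is ``even smaller'' has no support. For the same reason, Lau--Tenenbaum--Wu's Rademacher estimate does not transfer to Steinhaus by ``fewer constraints''. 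The two models need separate moment bounds. Your fallback does exactly this, as the paper does: cite Lau--Tenenbaum--Wu's $(v-u)^{4/3}$ bound for Rademacher, and prove the Steinhaus bound independently by either of the arguments above.
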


\begin{proof}
The Rademacher statement is \cite[Lemma~2.3]{LTW} (here one must use the
corrected version cited in the bibliography).  Its chaining proof uses
\begin{equation}\label{eq:LTWR}
  \E|M_f(v)-M_f(u)|^4
  \ll v^{2/3}(v-u)^{4/3}(\log v)^{52/3},
  \qquad v\ge u+1\ge 2.
\end{equation}
We verify a stronger estimate in the Steinhaus model.  For a finitely
supported sequence $(a_n)$, put
\begin{equation*}
  S=\sum_na_nf(n),
  \qquad
  b_m=\sum_{rs=m}a_ra_s.
\end{equation*}
Complete multiplicativity and orthogonality give
$
  \E|S|^4=\sum_m|b_m|^2.
$
Writing $d(n)$ for the divisor function, Cauchy--Schwarz and
$d(rs)\le d(r)d(s)$ yield
\begin{equation*}
  \sum_m|b_m|^2
  \le\sum_md(m)\sum_{rs=m}|a_r|^2|a_s|^2
  \le\Big(\sum_nd(n)|a_n|^2\Big)^2.
\end{equation*}
Take $a_n=\1_{\{u<n\le v\}}$ and use
H\"older's inequality to get
\begin{equation*}
\sum_{u<n\le v}d(n)
 \ll(v-u)^{2/3}
 \Big(\sum_{n\le v}d(n)^3\Big)^{1/3}.
\end{equation*}
Write $d_k(n)=\sum_{n_1\cdots n_k=n}1$ and note that $d(n)^3\leq d_8(n)$ for every $n$. Using a standard divisor-sum estimate of the form
\begin{equation*}
\sum_{n\leq x}d_k(n) \ll x(\log(2x))^{k-1}
\end{equation*}
for fixed $k$ gives
\begin{equation*}
  \E|M_f(v)-M_f(u)|^4
  \ll v^{2/3}(v-u)^{4/3}(\log(2v))^{14/3},
\end{equation*}
which is stronger than \eqref{eq:LTWR}. This is the only model-dependent input in the
argument of \cite[Lemma~2.3]{LTW}, so its subdivision and Borel--Cantelli steps apply unchanged.
\end{proof}

\subsection{Critical chaos and energy}

For $y\ge1$, let
\begin{equation*}
 F_y(s)=\sum_{P(n)\le y}\frac{f(n)}{n^s}
 =\begin{cases}
  \prod_{p\le y}(1-f(p)p^{-s})^{-1},
    &f \text{ Steinhaus},\\
  \prod_{p\le y}(1+f(p)p^{-s}),
    &f \text{ Rademacher},
 \end{cases}
\end{equation*}
denote a partial Dirichlet series and its corresponding Euler product.
Define
\begin{equation*}
 \mathcal E_y=\int_0^\infty|\Psi_f(z,y)|^2\,\frac{dz}{z^2},
\end{equation*}
and put $Z_y=\mathcal E_y/C_f(y)$, where $C_f(y)=\E|F_y(1/2+it)|^2$. Note that this expectation is independent of $t$, and
\begin{equation*}
 C_f(y)=
 \begin{cases}
  \prod_{p\le y}(1-1/p)^{-1},&f \text{ Steinhaus},\\
  \prod_{p\le y}(1+1/p),&f \text{ Rademacher}.
 \end{cases}
\end{equation*}
All empty products above are understood to be $1$.
For $y\geq 3$, \cref{lem:arithmetic}\textnormal{(i)} gives $C_f(y)\asymp\log y$.

Harper identifies normalized local mean squares of random Euler products with the total mass of a critical, approximately Gaussian multiplicative chaos. We use the corresponding whole-line integral selected naturally by Parseval's identity. 

\begin{lemma}\label[lemma]{lem:critical-energy}
For every $y\ge1$,
\begin{equation*}
  \mathcal E_y
  =\frac1{2\pi}\int_{\mathbb R}
    \Big|\frac{F_y(1/2+it)}{1/2+it}\Big|^2\,dt,
\end{equation*}
and $(Z_y)_y$ is a nonnegative mean-one martingale.
Moreover, for every prime $p$, there is an absolute constant $C$ such that
\begin{equation*}
  Z_p\le C Z_{p^-},
\end{equation*}
where $Z_{p^-}$ denotes the corresponding quantity formed using only primes $q<p$.
\end{lemma}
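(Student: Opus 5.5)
The proof has three parts, and I would take them in order: the Parseval identity, the martingale property, and the one-step bound at a prime.

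\textbf{Parseval identity.} Fix $y$ and consider $g(z)=\Psi_f(z,y)/z^{1/2}$. Under the substitution $z=e^u$ we have $dz/z^2=e^{-u}\,du$. Hence
\begin{equation*}
  \mathcal E_y=\int_{\mathbb R}|\Psi_f(e^u,y)e^{-u/2}|^2\,du .
\end{equation*}
The function $h(u)=\Psi_f(e^u,y)e^{-u/2}$ vanishes for $u<0$. Its Fourier transform is computed by Mellin inversion:
\begin{equation*}
  \int_0^\infty \Psi_f(z,y)z^{-s-1}\,dz=\frac{F_y(s)}{s}
  \qquad\text{for }\Re s>0 .
\end{equation*}
This holds because $F_y$ converges absolutely for $\Re s>0$. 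In the Steinhaus model this uses $\sum_{P(n)\le y}n^{-\sigma}=\prod_{p\le y}(1-p^{-\sigma})^{-1}<\infty$, and in the Rademacher model the product is finite. One justifies it by partial summation, which is valid since $\Psi_f(z,y)\ll_y z^{\varepsilon}$. Taking $s=1/2+it$ gives $\hat h(t)=F_y(1/2+it)/(1/2+it)$. Plancherel then yields the identity with the factor $1/(2\pi)$. To be safe, one first checks $h\in L^1\cap L^2$ almost surely, which again follows from the smooth-number count of \cref{lem:arithmetic}(iii) together with $|\Psi_f|\le\#\{n\le z:P(n)\le y\}$. For $y<2$ everything is trivial, since $\Psi_f=\1_{z\ge1}$.

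\textbf{Martingale property.} Use the identity and Fubini, which is justified by nonnegativity. For a prime $p$, write $F_p(s)=F_{p^-}(s)\,L_p(s)$, where
\begin{equation*}
  L_p(s)=(1-f(p)p^{-s})^{-1}\ \text{(Steinhaus)},\qquad L_p(s)=1+f(p)p^{-s}\ \text{(Rademacher)}.
\end{equation*}
The factor $F_{p^-}$ is $\F_{p^-}$-measurable, and $f(p)$ is independent of $\F_{p^-}$. Hence
\begin{equation*}
  \E\big[|F_p(1/2+it)|^2\mid\F_{p^-}\big]=|F_{p^-}(1/2+it)|^2\,\E|L_p(1/2+it)|^2 .
\end{equation*}
By the power series expansion, $\E|L_p|^2=(1-1/p)^{-1}$ in the Steinhaus case and $1+1/p$ in the Rademacher case, independently of $t$. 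This is exactly $C_f(p)/C_f(p^-)$. Integrating in $t$ gives $\E[\mathcal E_p\mid\F_{p^-}]=\mathcal E_{p^-}C_f(p)/C_f(p^-)$, that is, $\E[Z_p\mid\F_{p^-}]=Z_{p^-}$. Since the process is constant between primes, it is a martingale. For the mean, $\E Z_1=\mathcal E_1=\int_1^\infty z^{-2}\,dz=1$, consistent with $C_f(1)=1$. Integrability follows inductively from the same computation, and nonnegativity is clear.

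\textbf{One-step bound.} We have $|L_p(1/2+it)|^2\le(1-p^{-1/2})^{-2}\le(1-2^{-1/2})^{-2}$ in the Steinhaus case, and $\le(1+p^{-1/2})^2\le4$ in the Rademacher case, both uniformly in $t$. Also $C_f(p)/C_f(p^-)\ge1/2\cdot$const, being at least $1$ in both cases. Therefore $\mathcal E_p\le C\,\mathcal E_{p^-}$ pointwise, and so $Z_p\le CZ_{p^-}$.

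The main (mild) obstacle is the rigorous justification of the Mellin/Plancherel step, namely that $\Psi_f(e^u,y)e^{-u/2}$ is in $L^2$ with the stated transform. I would handle this through absolute convergence of $F_y$ on $\Re s>0$, which comes from the Euler product bound, combined with the vanishing of $h$ for $u<0$.
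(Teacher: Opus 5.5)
Your proposal is correct and follows essentially the same route as the paper. It derives Parseval via Mellin inversion and Plancherel rather than citing Montgomery--Vaughan (5.26), and like the paper it conditions on the newly revealed Euler factor, whose mean square equals $C_f(p)/C_f(p^-)$ uniformly in $t$, and bounds that factor pointwise inside the spectral integral.

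There is one small slip. \cref{lem:arithmetic}\textnormal{(iii)} only gives $|\Psi_f(z,y)|\ll_y z^{1-1/(2\log y)}$, which does not make $h$ square-integrable once $y\ge3$. The integrability of $h$ should instead be credited to the bound $\Psi_f(z,y)\ll_y z^{\varepsilon}$ that you already state, which follows from Rankin with small $\sigma$ or from the trivial count $(1+\log z/\log 2)^{\pi(y)}$.
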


\begin{proof}
For fixed $y$, the Steinhaus Dirichlet series is absolutely convergent throughout $\Re s>0$, while the Rademacher series is a Dirichlet polynomial. Parseval's formula for Dirichlet series therefore gives the displayed identity (see, e.g., \cite[(5.26)]{MontgomeryVaughan}).

For every real $t$, the Euler factor contributed by the newly revealed prime $p$ has squared mean $1+1/p$ in the
Rademacher model and $(1-1/p)^{-1}$ in the Steinhaus model.  These are
independent of $t$ and equal the corresponding increment of $C_f$.
Since the integrand is nonnegative, we may pass the conditional expectation through the integral, thus proving
$\E[Z_p\mid \F_{p^-}]=Z_{p^-}$. Since $Z_y$ is held constant between primes, we deduce that it forms a martingale. We also see that $\E Z_p = \E Z_{p^-}$, which combined with the identity
\begin{equation*}
  Z_1=\frac1{2\pi}\int_{\mathbb R}\frac{dt}{1/4+t^2}=1
\end{equation*}
implies that it has mean one.

Finally, bound the revealed local factor pointwise in the spectral integral to get
\begin{equation*}
 \frac{Z_p}{Z_{p^-}}
 \le
 \begin{cases}
  (1-1/p)(1-p^{-1/2})^{-2}
  =\dfrac{1+p^{-1/2}}{1-p^{-1/2}},&f\text{ Steinhaus},\\[7pt]
  \dfrac{(1+p^{-1/2})^2}{1+1/p},&f\text{ Rademacher}.
 \end{cases}
\end{equation*}
Both expressions are uniformly bounded for $p\ge2$.
\end{proof}

We shall use one consequence of Harper's critical-chaos analysis.  For
$2/3\le q<1$, write
\begin{equation*}
 D_q(y)=\min\Big\{1,\frac1{(1-q)\sqrt{\log_2y}}\Big\}.
\end{equation*}

\begin{lemma}\label[lemma]{lem:critical-moment}
Uniformly for large $y$ and $2/3\le q<1$,
\begin{equation*}
  \E Z_y^q\ll D_q(y)^q.
\end{equation*}
\end{lemma}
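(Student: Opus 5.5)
The plan is to reduce the bound to Harper's fractional moment estimates for local mean squares of the Euler product, as in \cite{HarperLow}. By \cref{lem:critical-energy} we have
\begin{equation*}
 Z_y=\frac{1}{2\pi C_f(y)}\int_{\mathbb R}\frac{|F_y(1/2+it)|^2}{1/4+t^2}\,dt .
\end{equation*}
Split the line into unit intervals $I_k=[k-1/2,k+1/2]$, $k\in\mathbb Z$, on each of which $1/(1/4+t^2)\asymp 1/(1+k^2)$. Because $q<1$, the map $u\mapsto u^q$ is subadditive, so
\begin{equation*}
 \E Z_y^q\ll\sum_{k\in\mathbb Z}(1+k^2)^{-q}\,
 \E\Big(\frac{1}{\log y}\int_{I_k}|F_y(1/2+it)|^2\,dt\Big)^q ,
\end{equation*}
where we have used $C_f(y)\asymp\log y$. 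The weights are summable uniformly in the stated range, since $2q\ge4/3>1$. It therefore suffices to prove that, uniformly in $k$,
\begin{equation*}
 \E\Big(\frac{1}{\log y}\int_{I_k}|F_y(1/2+it)|^2\,dt\Big)^q\ll D_q(y)^q .
\end{equation*}

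\emph{Steinhaus model.} Translation invariance in law applies: $(f(p)p^{-it_0})_p$ has the same distribution as $(f(p))_p$. This reduces every interval to $k=0$. The bound for $I_0$ is exactly Harper's key proposition in \cite{HarperLow}, namely the estimate
\begin{equation*}
 \E\Big(\frac1{\log y}\int_{-1/2}^{1/2}|F_y(1/2+it)|^2dt\Big)^q\ll\Big(\frac{1}{1+(1-q)\sqrt{\log_2 y}}\Big)^q ,
\end{equation*}
uniformly for $2/3\le q\le1$. The right side is $\asymp D_q(y)^q$. Harper states this for the product over $p\le x$ at $\sigma=1/2$, or via $\sigma=1/2+1/\log y$ truncations, and I would check that his form matches or differs only by a bounded factor.

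\emph{Rademacher model.} Translation invariance fails here. Harper nevertheless proves the same bound for intervals $|t-t_0|\le1/2$ uniformly in $t_0$; his upper bound argument handles general shifts, where the correlation $\E f(p)^2p^{-2it}$ introduces the harmless factor $\prod(1+\cos(2t\log p)/p)$ type. So I would cite that uniform version, or else repeat his Girsanov-type comparison with a ballot/barrier event.

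The main obstacle is bookkeeping. First, one must match Harper's exact normalization: he often uses $\log y$ versus $\prod(1\pm1/p)$, and smoothings at $\sigma=1/2+1/\log y$. Second, for large $|t|$ in the Rademacher case one must justify uniformity. Since $u\mapsto u^q$ is concave and we only need an upper bound, one can alternatively bound the contribution of $|k|>(\log y)^{A}$ using Hölder with the first moment. The first moment gives $\E(\cdots)\ll1$, so those terms contribute at most $\sum_{|k|>K}k^{-2q}\ll K^{1-2q}$. This is tiny compared to $D_q(y)^q\ge(\log_2y)^{-1/2}$ once $K=(\log_2 y)^{2}$, since $2q-1\ge1/3$ gives $K^{1-2q}\le(\log_2 y)^{-2/3}$. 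Uniformity is thus only required for $|k|\le(\log_2y)^2$, which is within the range Harper's estimates cover.
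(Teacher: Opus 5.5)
\textbf{Verdict.} Your route is the paper's: unit intervals, subadditivity of $u\mapsto u^q$ with weights $(1+k^2)^{-q}$, translation invariance to reduce the Steinhaus case to Harper's bound on $I_0$ (his $x=y^e$, $k=0$), and Jensen to discard $|k|>(\log_2y)^2$ at cost $(\log_2y)^{2-4q}\le(\log_2y)^{-1/2}\le D_q(y)^q$. The Steinhaus half is fine. The Rademacher half, however, rests on an input Harper does not supply in the form you state.

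\textbf{The gap in the Rademacher case.} Your reduction ``it suffices to prove, uniformly in $k$, $\E(\cdots)^q\ll D_q(y)^q$'' asks for more than Harper provides. As the paper uses it, Harper's shifted estimate (Key Propositions~3--4 and his Section~4.4) has three limitations:
\begin{enumerate}[label=\textnormal{(\alph*)}]
\item it holds only for $|N|\le(\log_2y)^2$;
\item it carries a loss $(1+|N|)^{q/4}$;
\item it applies only to the integral with $|t|\le(\log_2x)^{-1/2}$ removed.
\end{enumerate}
Point (c) is not bookkeeping. The factor you call harmless is not harmless near $t=0$. For $|t|\le c/\log z$, every prime $p\le z$ has $\cos(t\log p)\approx1$, so the log of the Euler product has roughly doubled variance there. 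The critical-chaos barrier argument then does not apply, which is why Harper excises that neighbourhood. For the same reason, ``repeating his Girsanov comparison'' on all of $I_0$ would need the same excision.

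\textbf{How to close it.} Add the two steps the paper uses.
\begin{enumerate}[label=\textnormal{(\roman*)}]
\item Put $\delta_y=(\log_2y)^{-1/2}$. Harper's cutoff $(\log_2 y^e)^{-1/2}$ is smaller than $\delta_y$, so his truncated integral dominates the one over $I_0\cap\{|t|>\delta_y\}$. Only $I_0$ meets $[-\delta_y,\delta_y]$. The excised piece $\frac1{\log y}\int_{|t|\le\delta_y}|F_y(1/2+it)|^2\,dt$ is handled trivially by Jensen and $\E|F_y(1/2+it)|^2=C_f(y)\asymp\log y$. This gives a $q$th moment $\ll\delta_y^q\le D_q(y)^q$.
\item Weaken your uniform target to $\E H_N(y)^q\ll D_q(y)^q(1+|N|)^{q/4}$ for $|N|\le(\log_2y)^2$. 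This still suffices, because $\sum_N(1+|N|)^{-2q+q/4}<\infty$ whenever $7q/4>1$, which holds for $q\ge2/3$.
\end{enumerate}
Your tail argument for $|N|>(\log_2y)^2$ then finishes the proof exactly as in the paper.
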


\begin{proof}
If $D_q(y)=1$, the assertion follows from concavity and $\E Z_y=1$.
Suppose henceforth that $D_q(y)<1$.  Thus we are in the range where the desired estimate is nontrivial. Let
$I_N=[N-1/2,N+1/2]$ and
\begin{equation*}
  H_N(y)=\frac1{\log y}
  \int_{I_N}|F_y(1/2+it)|^2\,dt.
\end{equation*}
In Harper's notation, $F_k$ is the Euler product over primes at most
$x^{e^{-(k+1)}}$. Thus, on taking $x=y^e$ and $k=0$ in his notation, one
has $F_0=F_y$, $\log x=e\log y$, and $\log_2x=\log_2y+1$.
Harper's Steinhaus upper bound argument, based on Key
Propositions~1 and~2 from \cite[Sections~4.1--4.3]{HarperLow}, then gives
\begin{equation*}
  \E H_0(y)^q\ll D_q(y)^q.
\end{equation*}
Replacing $f(p)$ with $f(p)p^{-iN}$ preserves its joint law and translates
the spectral variable.  The same estimate therefore holds uniformly for
every $N\in\mathbb Z$.

For the Rademacher model, put $\delta_y=(\log_2y)^{-1/2}$ and define
\begin{equation*}
 H_N^\circ(y)=\frac1{\log y}
 \int_{I_N\cap\{|t|>\delta_y\}}|F_y(1/2+it)|^2\,dt.
\end{equation*}
The corresponding shifted estimate follows from Key
Propositions~3 and~4 and the argument in \cite[Section~4.4]{HarperLow}.  With the same choices
$x=y^e$ and $k=0$ in Harper's notation, it gives, for
$|N|\le(\log_2y)^2$,
\begin{equation*}
  \E H_N^\circ(y)^q
  \ll D_q(y)^q(1+|N|)^{q/4}.
\end{equation*}
Harper's cutoff $(\log_2x)^{-1/2}$ is smaller than $\delta_y$, so our
truncated integral is bounded by his.  For large $y$, only $I_0$ meets
$[-\delta_y,\delta_y]$.  Since
$\E|F_y(1/2+it)|^2=C_f(y)\asymp\log y$, concavity shows that the omitted
part has $q$th moment
\begin{equation*}
 \ll \delta_y^q =(\log_2y)^{-q/2}\leq D_q(y)^q,
\end{equation*}
so the same bound holds with $H_N^\circ(y)$ replaced by $H_N(y)$.  For every
$N$, concavity also gives the trivial estimate $\E H_N(y)^q\ll1$.

On $I_N$, one has $|1/2+it|^{-2}\ll(1+N^2)^{-1}$.  The identity in
\cref{lem:critical-energy} and $C_f(y)\asymp\log y$ therefore give
\begin{equation}\label{eq:spectral-domination}
  Z_y\ll\sum_{N\in\mathbb Z}(1+N^2)^{-1}H_N(y).
\end{equation}
Since $2/3\leq q<1$, subadditivity of the $q$th power applied to
\eqref{eq:spectral-domination} gives in the Steinhaus model
\begin{equation*}
  \E Z_y^q
  \ll D_q(y)^q\sum_{N\in\mathbb Z}(1+N^2)^{-q}
  \ll D_q(y)^q.
\end{equation*}
In the Rademacher model, \eqref{eq:spectral-domination} and the preceding
moment estimates show that the terms with $|N|\le(\log_2y)^2$ contribute at most
\begin{equation*}
 D_q(y)^q\sum_{N\in\mathbb Z}(1+|N|)^{-2q+q/4}
 \ll D_q(y)^q.
\end{equation*}
The remaining terms contribute
\begin{equation*}
 \ll\sum_{|N|>(\log_2y)^2}|N|^{-2q}
 \ll(\log_2y)^{2-4q}.
\end{equation*}
As $q\ge2/3$ and $D_q(y)\gg(\log_2y)^{-1/2}$, this is
$O(D_q(y)^q)$.
\end{proof}

\section{The prime martingale and its quadratic clock}
\noindent We will apply \cref{lem:sparse-grid} with $A=1$, so fix the corresponding
$c_0\in(0,1)$ and test points $(x_i)$. 

We divide the prime range into layers of unit length in the natural Euler-product time $\log_2y$.
For an integer $\ell$ sufficiently
large, set
\begin{equation*}
 \Lambda=\Lambda_\ell=\lceil e^\ell\rceil,
 \qquad
 X_\ell=\exp(\exp(\Lambda_\ell)),
\end{equation*}
and consider the macroblock $X_\ell<x\le X_{\ell+1}$.  Put
\begin{equation*}
 \tau_0=\Lambda_\ell-2\ell,
 \qquad
 J=\Lambda_{\ell+1}-\Lambda_\ell+2\ell,
\end{equation*}
and, for $0\leq j\leq J$, set
\begin{equation*}
 \tau_j=\tau_0+j,
 \qquad
 y_j=\exp(\exp(\tau_j)).
\end{equation*}
Thus the prime layers are $(y_{j-1},y_j]$, $1\leq j\leq J$, and
\begin{equation*}
J\asymp\Lambda,\qquad \log y_j=e\log y_{j-1},\qquad y_J=X_{\ell+1}.
\end{equation*}
Throughout this macroblock, $\Lambda\asymp\log_2x$,
$\ell\asymp\log_3x$, and $\tau_j=\log_2y_j$.  The fixed constant $c_0$
affects only the implied constants below.
Moreover, the number of test points in this macroblock is at most
\begin{equation}\label{eq:grid-cardinality}
 (2\log X_{\ell+1})^{1/c_0}
 =\exp(O_{c_0}(\Lambda)).
\end{equation}

\subsection{Largest-prime reduction}

For $x\le X_{\ell+1}$, define
\begin{equation*}
  \mathcal M_\ell(x)
  =\sum_{y_0<p\le y_J}f(p)\Psi_f'(x/p,p).
\end{equation*}
In the Steinhaus model, put
\begin{equation*}
  \mathcal R_\ell(x)
  =\sum_{y_0<p\le y_J}
    \sum_{\substack{k\ge2\\p^k\le x}}
    f(p)^k\Psi_f'(x/p^k,p),
\end{equation*}
and set $\mathcal R_\ell(x)=0$ in the Rademacher model.

\begin{lemma}\label[lemma]{lem:largest-prime}
For every $x\le X_{\ell+1}$,
\begin{equation*}
  M_f(x)=\Psi_f(x,y_0)+\mathcal M_\ell(x)+\mathcal R_\ell(x).
\end{equation*}
Moreover,
\begin{equation*}
  \E|\mathcal R_\ell(x)|^2\ll\frac{x}{y_0}.
\end{equation*}
\end{lemma}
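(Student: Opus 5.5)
The plan is to classify every $n\le x$ according to its largest prime factor $P(n)$. Since $x\le X_{\ell+1}=y_J$, every $n\le x$ has $P(n)\le y_J$.

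\emph{The decomposition.} If $P(n)\le y_0$, then $n$ contributes to $\Psi_f(x,y_0)$. Otherwise $p=P(n)\in(y_0,y_J]$. Write $n=p^km$, where $k\ge1$ is the exact power of $p$ dividing $n$ and $P(m)<p$.
\begin{itemize}
\item In the Steinhaus model, complete multiplicativity gives $f(n)=f(p)^kf(m)$. Summing over $m\le x/p^k$ with $P(m)<p$ produces $f(p)^k\Psi_f'(x/p^k,p)$. The terms with $k=1$ give $\mathcal M_\ell(x)$ and those with $k\ge2$ give $\mathcal R_\ell(x)$.
\item In the Rademacher model, $f(n)=0$ unless $k=1$, and then $f(n)=f(p)f(m)$ with $m$ squarefree. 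Hence only $\mathcal M_\ell(x)$ survives and $\mathcal R_\ell=0$, consistent with the definition.
\end{itemize}
This proves the identity.

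\emph{The second-moment bound.} Only the Steinhaus model needs treatment. Expand $\mathcal R_\ell(x)=\sum_{n\in S}f(n)$, where $S$ is the set of $n\le x$ whose largest prime $p=P(n)>y_0$ divides $n$ at least twice. By orthogonality of Steinhaus $f$ on distinct integers, $\E|\sum_{n\in S}f(n)|^2=\#S$. Each $n\in S$ is divisible by $p^2$ for some prime $p>y_0$, so
\begin{equation*}
 \#S\le\sum_{p>y_0}\frac{x}{p^2}\ll\frac{x}{y_0}.
\end{equation*}

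\emph{Where the difficulty lies.} The only point needing care is the orthogonality step: the representation $n=p^km$ is unique, so distinct pairs $(k,m)$ give distinct $n$. The cross terms therefore vanish, and no diagonal over $k$ has to be controlled separately. With that checked, the whole argument is routine.
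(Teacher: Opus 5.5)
Your proposal is correct and follows essentially the same route as the paper. You decompose by the largest prime factor via the unique representation $n=p^km$ with $P(m)<p$, then apply Steinhaus orthogonality to the distinct integers making up $\mathcal R_\ell(x)$. The only cosmetic difference is that you count them by $\sum_{p>y_0}x/p^2$, since each such $n$ is a multiple of $p^2$, whereas the paper sums $\sum_{k\ge2}x/p^k$ over the exact exponent; both give $\ll x/y_0$.
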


\begin{proof}
Every integer outside $\Psi_f(x,y_0)$ has a unique representation $n=p^km$,
where $p>y_0$ is its largest prime factor and  $k\ge1$.  In the
Rademacher model the support is squarefree, so only $k=1$ occurs.

In the Steinhaus model, all integers represented in $\mathcal R_\ell(x)$ are
distinct.  Orthogonality therefore gives
\begin{equation*}
 \E|\mathcal R_\ell(x)|^2 = \sum_{y_0<p\le y_J}\sum_{k\geq 2}\#\{m\leq x/p^k : P(m)<p\}  \le\sum_{p>y_0}\sum_{k\ge2}\frac{x}{p^k} \ll x\sum_{p>y_0}\frac{1}{p^2}\ll \frac{x}{y_0}. \qedhere
\end{equation*}
\end{proof}

We now justify that we can discard all but the contributions of $\mathcal M_\ell(x)$.

\begin{lemma}\label[lemma]{lem:negligible}
Almost surely, for all sufficiently large $\ell$,
\begin{equation*}
 \max_{X_\ell<x_i\le X_{\ell+1}}
 \frac{|\Psi_f(x_i,y_0)|+|\mathcal R_\ell(x_i)|}{\sqrt{x_i}}
 \ll\Lambda^{-1}.
\end{equation*}
\end{lemma}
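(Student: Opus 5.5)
The plan is a second-moment bound at each test point, followed by a union bound over the test points in the macroblock and Borel--Cantelli in $\ell$. Fix a test point $x_i\in(X_\ell,X_{\ell+1}]$.

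\emph{The smooth part $\Psi_f(x_i,y_0)$.} Orthogonality gives $\E|\Psi_f(x_i,y_0)|^2\le\#\{n\le x_i:P(n)\le y_0\}$ in both models. Apply \cref{lem:arithmetic}(iii) with $z=x_i$ and $y=y_0$. We have $\log y_0=\exp(\Lambda-2\ell)=e^{-2\ell}\exp(\Lambda)$ and $\log x_i\ge\log X_\ell=\exp(\Lambda)$, so
\begin{equation*}
\frac{\log x_i}{2\log y_0}\ge \tfrac12 e^{2\ell}\gg\Lambda^2 .
\end{equation*}
The factor $(\log y_0)^C\le\exp(C\Lambda)$ is then negligible, and we get $\E|\Psi_f(x_i,y_0)|^2\ll x_i\exp(-c\Lambda^2)$ for large $\ell$.

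\emph{The remainder $\mathcal R_\ell(x_i)$.} By \cref{lem:largest-prime}, $\E|\mathcal R_\ell(x_i)|^2\ll x_i/y_0$. Here $y_0=\exp(\exp(\Lambda-2\ell))$, which is doubly exponentially large, so $1/y_0\le\exp(-c\Lambda^2)$ as well.

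\emph{Union bound and Borel--Cantelli.} By Chebyshev, for each test point in the macroblock,
\begin{equation*}
\Prob\big(|\Psi_f(x_i,y_0)|+|\mathcal R_\ell(x_i)|>\sqrt{x_i}\,\Lambda^{-1}\big)\ll\Lambda^2\exp(-c\Lambda^2).
\end{equation*}
By \eqref{eq:grid-cardinality} there are at most $\exp(O(\Lambda))$ test points in the macroblock. Summing gives a total failure probability of at most $\exp(O(\Lambda)-c\Lambda^2)$. Since $\Lambda\asymp e^\ell$, this is summable in $\ell$, and Borel--Cantelli yields the claim almost surely for all large $\ell$.

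\emph{Where care is needed.} The main point is the exponent bookkeeping. We need $\log x_i/\log y_0\gg\Lambda^{1+\delta}$ for some $\delta>0$, so that the saving $\exp(-\log x_i/(2\log y_0))$ beats both the $(\log y_0)^C$ loss and the $\exp(O(\Lambda))$ grid count. The choice $\tau_0=\Lambda-2\ell$ gives the ratio $e^{2\ell}\asymp\Lambda^2$, which suffices. If the constants turned out tight, one could instead use a higher moment, but second moments appear to be enough.
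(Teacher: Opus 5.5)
Your proposal is correct and follows the paper's proof essentially step for step. You bound the smooth part via orthogonality and \cref{lem:arithmetic}(iii), using $\log x_i/\log y_0\ge e^{2\ell}\asymp\Lambda^2$, and the repeated-prime part via \cref{lem:largest-prime}; you then apply Chebyshev at threshold $\sqrt{x_i}\Lambda^{-1}$, take a union bound over the $\exp(O(\Lambda))$ test points, and finish with Borel--Cantelli. The only cosmetic difference is that the paper leaves the repeated-prime failure probability as $O(\Lambda^2/y_0)$ instead of converting it to $\exp(-c\Lambda^2)$.
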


\begin{proof}
By \cref{lem:largest-prime} and Markov's inequality, the probability that the
repeated-prime term exceeds $\sqrt{x_i}\Lambda^{-1}$ at one test point is
$O(\Lambda^2/y_0)$.  Multiplication by the number of test points in
\eqref{eq:grid-cardinality} leaves a
summable series in $\ell$.

For the smooth term, orthogonality and
\cref{lem:arithmetic}\textnormal{(iii)} give
\begin{equation}\label{eq:smooth-second-moment}
\E|\Psi_f(x_i,y_0)|^2 \leq \#\{n\le x_i:P(n)\le y_0\}
  \ll x_i(\log y_0)^C
  \exp\big(-\frac{\log x_i}{2\log y_0}\big).
\end{equation}
Since $x_i>X_\ell$,
\begin{equation}\label{eq:smooth-separation}
  \frac{\log x_i}{\log y_0}\ge e^{2\ell}\asymp\Lambda^2.
\end{equation}
By \eqref{eq:smooth-second-moment} and \eqref{eq:smooth-separation}, Markov's inequality at threshold
$\sqrt{x_i}\Lambda^{-1}$ 
gives probability 
$\ll \exp(O(\Lambda)-c\Lambda^2)$
at each test point, for some absolute $c>0$.  Here
$(\log y_0)^C=\exp(C\log_2y_0)=\exp(O(\Lambda))$.  By
\eqref{eq:grid-cardinality}, the same estimate with different constants holds after a union bound. These probabilities are summable in $\ell$, so Borel--Cantelli proves the lemma.
\end{proof}

Reveal the primes in $(y_0,y_J]$ in increasing order.  The coefficient
$\Psi_f'(x/p,p)$ is $\F_{p^-}$-measurable, while $f(p)\Psi_f'(x/p,p)$ has conditional mean zero. Thus $\mathcal M_\ell(x)$ is a martingale with
predictable quadratic variation
\begin{equation*}
  V_\ell(x)=\sum_{y_0<p\le y_J}|\Psi_f'(x/p,p)|^2.
\end{equation*}
Terms with $p>x$ vanish. Note that by \cref{lem:subgaussian}, for every $x,t,v>0$,
\begin{equation} \label{eq:main-subgaussian}
  \Prob\big(|\mathcal M_\ell(x)|>t,\ V_\ell(x)\le v\big)
  \le2\exp(-t^2/(10v)).
\end{equation}

\subsection{Smoothing into consecutive windows}

For $1\le j\le J$, define energies
\begin{equation*}
  \mathcal U_j
  =\frac1{\log y_j}
   \int_0^\infty
   \sup_{y_{j-1}\le u\le y_j}|\Psi_f(z,u)|^2
   \,\frac{dz}{z^2}.
\end{equation*}
For fixed $z$, the function $u\mapsto \Psi_f(z,u)$ is a step function whose jumps occur only at primes $p\leq z$. The supremum is therefore a maximum over a finite set.
For a test point $x$ in the macroblock, let
\begin{equation*}
 \mathcal N_\ell(x)
 =\Big\{1\le j\le J:
     y_{j-1}<x,\,
     \frac{\log x}{\log y_{j-1}}\le\Lambda^2
   \Big\}.
\end{equation*}
This is an interval of consecutive indices.  Indeed, membership is equivalent
to
\begin{equation*}
 \log_2x-2\log\Lambda\le\tau_{j-1}<\log_2x.
\end{equation*}
Since $\log\Lambda=\ell+O(1)$ and the $\tau_j$ are spaced by one, there is an absolute $C'>0$ such that
\begin{equation*}
  |\mathcal N_\ell(x)|\le C'\ell
\end{equation*}
for all large $\ell$.  Let $K_\ell=\lceil C'\ell\rceil$.

\begin{lemma}\label[lemma]{lem:smoothing}
There are events $\mathcal A_\ell$ such that
\begin{equation*}
  \sum_{\ell\ge1}\Prob(\mathcal A_\ell)<\infty
\end{equation*}
and, outside $\mathcal A_\ell$,
\begin{equation*}
  \max_{X_\ell<x_i\le X_{\ell+1}}
  \frac{V_\ell(x_i)}{x_i}
  \ll
  \max_{1\leq k\leq J}\sum_{j=k}^{\min(k+K_\ell-1,J)}\mathcal U_j
  +\Lambda^{-1}.
\end{equation*}
\end{lemma}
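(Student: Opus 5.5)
We split the prime range $(y_0,y_J]$ into the layers $(y_{j-1},y_j]$ and write
\begin{equation*}
  V_\ell(x)=\sum_{j=1}^J\sum_{y_{j-1}<p\le y_j}|\Psi_f'(x/p,p)|^2 .
\end{equation*}
The first step is to replace each prime sum by an integral, using the usual averaging in the prime variable (as in Harper and Caich). Fix $p$ in layer $j$ and set $H=\Lambda^{C}$ for a suitable constant $C$. For $z\in[x/p,\,x(1+1/H)/p]$ we have
\begin{equation*}
  \Psi_f'(x/p,p)=\Psi_f'(z,p)-\big(\Psi_f'(z,p)-\Psi_f'(x/p,p)\big),
\end{equation*}
so that
\begin{equation*}
  |\Psi_f'(x/p,p)|^2\le 2\,\frac{H p}{x}\int_{x/p}^{x(1+1/H)/p}\Big(\sup_{y_{j-1}\le u\le y_j}|\Psi_f(z,u)|^2+|\text{short difference}|^2\Big)\,dz .
\end{equation*}
Here $\Psi_f'(z,p)$ equals $\Psi_f(z,u)$ for $u$ just below $p$, and $u$ stays inside $[y_{j-1},y_j]$. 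Next we exchange the sum over $p$ with the integral over $z$. A point $z$ is covered by those primes $p\in(x/z,\,x(1+1/H)/z]$ that lie in layer $j$, so the weights become
\begin{equation*}
  \frac{H}{x}\sum_{x/z<p\le x(1+1/H)/z}p\;\ll\;\frac{x}{z^2}\cdot H\sum_{b/(1+1/H)<p\le b}\frac1p,\qquad b=x(1+1/H)/z .
\end{equation*}
By Lemma~\ref{lem:arithmetic}(ii) this is $\ll x/(z^2\log b)$, provided $b\ge H^2$. Here $b$ is a prime of size roughly in $(y_{j-1},y_j]$, so $\log b\asymp\log y_j$ up to the factor $e$. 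Very small primes $p\le H^2$ lie below $y_0$ and never occur. The main part of layer $j$ therefore contributes $\ll x\,\mathcal U_j$.

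The short-difference errors $\Psi_f'(z,p)-\Psi_f'(x/p,p)$ are sums over $n$ in intervals of relative length $1/H$. Their total expected second moment, summed over $p$ and integrated, is $\ll x\sum_p\frac1p\cdot\frac{1}{H}+$ (a contribution from short intervals of length below $1$, controlled by counting integers). This is $\ll x\Lambda/H$. By Markov's inequality, a union bound over the $\exp(O(\Lambda))$ test points from \eqref{eq:grid-cardinality} does not directly give summability with a polynomial $H$, so we would take $H=\exp(\sqrt\Lambda)$ or larger. Lemma~\ref{lem:arithmetic}(ii) allows any $H\le b^{1/2}$, and $b\ge y_0$ is enormous. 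The exceptional events for all test points are collected into $\mathcal A_\ell$. Their probability is then $\exp(O(\Lambda))\cdot\Lambda\cdot\Lambda/H$, which is summable, and off $\mathcal A_\ell$ the error is $\le x\Lambda^{-1}$. One caveat: when $x/p$ is small, a relative window contains no integers or only $O(1)$ of them. Integrality handles this, since the difference vanishes unless an integer lies in the window, and the expected count is $\ll x/(pH)+\1$. The ``$+\1$'' terms come from $p$ close to $x$, where $\Psi'$ is bounded by $(x/p)$-many terms. We would handle these by noting that primes $p>x/H^{2}$ contribute, in mean, only $\sum_{p>x/H^2}(x/p)^2/\ldots$; rather, we would just absorb that range by the trivial expectation bound $\E|\Psi'_f(x/p,p)|^2\le x/p$ summed over $x/H^2<p\le x$, giving $x\log(1+O(\log H/\log x))$. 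This is tiny relative to $x\Lambda^{-1}$ in mean, and it goes into $\mathcal A_\ell$ via Markov and the union bound.

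Finally we restrict to the window $\mathcal N_\ell(x)$. Layers with $y_{j-1}\ge x$ contribute nothing, since then $p>x$. For layers with $\log x/\log y_{j-1}>\Lambda^2$, that is $p\le y_j$ with $\log x/\log p\gg\Lambda^2$, the sum $\Psi_f'(x/p,p)$ is a smooth-number sum. By orthogonality and Lemma~\ref{lem:arithmetic}(iii), its expected square is $\ll (x/p)(\log p)^C\exp(-c\Lambda^2)$. Summing over $p$ gives $\ll x\Lambda\exp(O(\Lambda)-c\Lambda^2)$, and Markov together with the union bound puts this into $\mathcal A_\ell$ as well. The surviving layers form the consecutive block $\mathcal N_\ell(x)$, which has at most $K_\ell$ indices. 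Their contribution is therefore at most the maximum over $k$ of the sum over $j$ from $k$ to $\min(k+K_\ell-1,J)$, which is the claimed bound uniformly in the test points. The main obstacle is making the smoothing error small enough, uniformly over exponentially many test points. The choice $H=\exp(\Lambda^{2})$, which is still far below $y_0^{1/2}$, together with careful handling of the near-$x$ primes, is where I expect the real care to be needed.
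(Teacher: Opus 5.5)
Your overall route matches the paper's: you average over a short multiplicative window in the endpoint, swap sum and integral by Tonelli, use Lemma~\ref{lem:arithmetic}(ii) to turn the prime weights into $1/\log y_j$ and hence into $\mathcal U_j$, and discard the layers outside $\mathcal N_\ell(x)$ by smooth-number counts and Markov. Treating the far layers on $V_\ell$ before smoothing is a harmless variation. So is a macroblock-uniform $H$ with the union bound \eqref{eq:grid-cardinality}, in place of the paper's $Q=(\log x)^{2/c_0}$ summed over all $i$. Note, though, that $H=\exp(\sqrt\Lambda)$ does not beat the $\exp(O_{c_0}(\Lambda))$ test points, so you really need something like your final choice $H=\exp(\Lambda^2)$.

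The genuine gap is the smoothing error from primes near $x$. Your per-prime bound $x/(pH)+1$ leaves a $+1$ for every prime $p>x/H$ whose window contains an integer. Your remedy is to remove the range $x/H^2<p\le x$ from $V_\ell(x)$ using $\E|\Psi_f'(x/p,p)|^2\le x/p$, and this does not give summable exceptional events. For $p>\sqrt x$ the count is exactly $\lfloor x/p\rfloor$, so that range genuinely has expected contribution $\asymp x\log H/\log x$. Markov at level $x\Lambda^{-1}$ then only gives probability about $\Lambda\log H/\log x_i\ge\Lambda^2/\log x_i$ per test point. Since $\log x_i\asymp i^{c_0}$ with $c_0<1$, these probabilities are not summable over $i$. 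Indeed, the macroblock alone contains roughly $\exp(\Lambda_{\ell+1}/c_0)$ test points, while $\log x_i\le\exp(\Lambda_{\ell+1})$ there. Being ``tiny in mean'' is not enough.

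The missing idea is to count globally rather than prime by prime. After Tonelli, each pair $(p,m)$ with $P(m)<p$ contributes at most $1$ to the averaged error: the factor $Hp/x$ multiplies a $z$-interval of length at most $x/(pH)$. Its existence forces $x<pm\le x(1+1/H)$ in your normalization. Since $p=P(pm)$ occurs to the first power, distinct pairs give distinct integers. Hence the total expected error is at most $x/H+1$: the $+1$ appears once, not once per prime, and no separate treatment of primes near $x$ is needed. This is exactly the paper's \eqref{eq:boundary-clock-mean}, and with it your argument goes through.
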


\begin{proof}
For a test point $x=x_i$, put
$Q=(\log x)^{2/c_0}$, and write
\begin{align*}
  L_\ell(x)
  &=\sum_{y_0<p\le y_J}\frac Qp
    \int_p^{p(1+1/Q)}|\Psi_f'(x/t,p)|^2\,dt,\\
  W_\ell(x)
  &=\sum_{y_0<p\le y_J}\frac Qp
    \int_p^{p(1+1/Q)}
      |\Psi'_f(x/p,p)-\Psi'_f(x/t,p)|^2\,dt.
\end{align*}
Using that the averaging measure $Q/p\,dt$ has total mass one on this interval and the inequality $|a+b|^2 \leq 2|a|^2+2|b|^2$, we see that
\begin{equation}\label{eq:clock-split}
  V_\ell(x)\le2L_\ell(x)+2W_\ell(x).
\end{equation}

The change of variables $z=x/t$ and Tonelli give
\begin{equation*}
 \frac{L_\ell(x)}x
 =\int_0^\infty
  Q\!\sum_{\substack{y_0<p\le y_J\\
       x/[z(1+1/Q)]<p\le x/z}}
  \frac{|\Psi_f'(z,p)|^2}{p}\,\frac{dz}{z^2}.
\end{equation*}
If $p\in(y_{j-1},y_j]$, let $u_p$ be the largest prime in
$(y_{j-1},p)$, and put $u_p=y_{j-1}$ when this interval contains no such prime.
Then
\begin{equation*}
  \Psi_f'(z,p)=\Psi_f(z,u_p),
  \qquad
  |\Psi_f'(z,p)|^2
  \le\sup_{y_{j-1}\le u\le y_j}|\Psi_f(z,u)|^2.
\end{equation*}
Partition $L_\ell(x)=L_{\ell,\mathrm{near}}(x)
+L_{\ell,\mathrm{far}}(x)$ according to whether the layer containing $p$ does or
does not belong to $\mathcal N_\ell(x)$. Thus $L_{\ell,\mathrm{near}}$ is the contribution of the primes $p\in(y_{j-1},y_j]$ with $j\in \mathcal N_\ell(x)$, and $L_{\ell,\mathrm{far}}$ is the contribution of the remaining layers. A nonzero term in the far part necessarily comes from a layer satisfying
\begin{equation} \label{eq:farL}
y_{j-1}<x,\qquad \frac{\log x}{\log y_{j-1}}>\Lambda^2.
\end{equation}

Write $b=x/z$.  Whenever the prime sum is nonempty, $b\ge y_0$, so
$Q\le b^{1/2}$ for all sufficiently large $\ell$. Indeed,
$\log Q=O(\Lambda)$, whereas
$\log(y_0^{1/2})=\frac{1}{2}\exp(\Lambda-2\ell)$.  
Applying
\cref{lem:arithmetic}\textnormal{(ii)} gives
\begin{equation*}
  Q\sum_{b/(1+1/Q)<p\le b}\frac1p
  \ll\frac1{\log b}.
\end{equation*}
Decompose the prime window $(b/(1+1/Q),b]$ according to its intersection with the layers $(y_{j-1},y_j]$. 
The same upper bound holds for each layer, and if the $j$-th one is nonempty, then
$\log b\asymp\log y_j$. Thus the reciprocal mass of each layer-piece is
$O(1/(Q\log y_j))$.  Thus, bounding $|\Psi_f'(z,p)|^2$ by the supremum over the corresponding
layer and extending the resulting $z$-integral to all $z>0$, we obtain
\begin{equation}\label{eq:near-clock}
 \frac{L_{\ell,\mathrm{near}}(x)}x
 \ll
 \sum_{j\in\mathcal N_\ell(x)}
 \frac1{\log y_j}
 \int_0^\infty
 \sup_{y_{j-1}\le u\le y_j}|\Psi_f(z,u)|^2
 \,\frac{dz}{z^2}
 =\sum_{j\in\mathcal N_\ell(x)}\mathcal U_j.
\end{equation}

Consider a nonzero contribution to $L_{\ell,\mathrm{far}}(x)$ from a prime $p\in (y_{j-1},y_j]$. Then \eqref{eq:farL} holds, and
since
$\log p\le\log y_j=e\log y_{j-1}$ and
$t\le p(1+1/Q)$, we get
\begin{equation*}
  \frac{\log(x/t)}{\log p}
  \ge\frac{\Lambda^2}{e}-1
     -\frac{\log(1+1/Q)}{\log p}
  \ge c\Lambda^2
\end{equation*}
for some $c>0$.
Orthogonality and \cref{lem:arithmetic}\textnormal{(iii)} therefore give
\begin{equation*}
  \E|\Psi_f'(x/t,p)|^2 \ll \frac{x}{t}(\log p)^C\exp\big(-\frac{\log(x/t)}{2\log p}\big)
  \ll\frac xt\exp(-c\Lambda^2),
\end{equation*}
where the fixed power of $\log p$ has been absorbed and $c>0$ is a (possibly different) constant.  Since
\begin{equation*}
 \frac Qp\int_p^{p(1+1/Q)}\frac xt\,dt
 =\frac{xQ}{p}\log(1+1/Q)
 \ll\frac xp,
\end{equation*}
it follows that
\begin{equation}\label{eq:far-clock-mean}
  \E L_{\ell,\mathrm{far}}(x) \ll \exp(-c\Lambda^2) \sum_{p\leq y_J}\frac{x}{p}
  \ll x\Lambda\exp(-c\Lambda^2),
\end{equation}
where we used \cref{lem:arithmetic}\textnormal{(i)}.
After taking the union over the test points using \eqref{eq:grid-cardinality}, the
probability that $L_{\ell,\mathrm{far}}(x_i)/x_i>\Lambda^{-1}$ somewhere in
the macroblock is summable in $\ell$, by Markov's inequality.

It remains to control the error caused by replacing the endpoint $x/p$ with $x/t$.  By orthogonality,
\begin{equation*}
  \E W_\ell(x)
  \leq \sum_{y_0<p\le y_J}\frac Qp
    \int_p^{p(1+1/Q)}
      \sum_{\substack{x/t<m\leq x/p \\ P(m)<p}}1\,dt.
\end{equation*}
After Tonelli, fix a pair $(p,m)$ counted on the right.  Its contribution to
the $t$-integral, including the factor $Q/p$, is at most one.  Its existence
forces
\begin{equation*}
  \frac{x}{1+1/Q}<pm\le x,
  \qquad P(m)<p.
\end{equation*}
Thus $p=P(pm)$ and occurs to exponent one, so each integer $pm$ is
represented at most once. Therefore, the analytic error is converted into an elementary count of integers lying in a short interval immediately below $x$, and in fact
\begin{equation}\label{eq:boundary-clock-mean}
  \E W_\ell(x)\leq \frac{x}{Q+1}+1\ll\frac{x}{Q}.
\end{equation}
Since $\log x_i\asymp i^{c_0}$, Markov's inequality gives, for some $\ell_0$ sufficiently large,
\begin{equation*}
 \sum_{\ell\geq \ell_0}\sum_{X_\ell <x_i\leq X_{\ell+1}}
  \Prob\Big(
    \frac{W_\ell(x_i)}{x_i}>(\log_2x_i)^{-1}
  \Big)
 \ll\sum_i
    \frac{\log i}{i^2}
  <\infty
\end{equation*}

Combining \eqref{eq:clock-split} and \eqref{eq:near-clock} with the preceding
exceptional-event estimates obtained from \eqref{eq:far-clock-mean} and
\eqref{eq:boundary-clock-mean}, and using $\log_2x_i \asymp \Lambda$, we get
\begin{equation*}
  \frac{V_\ell(x_i)}{x_i}
  \ll\sum_{j\in\mathcal N_\ell(x_i)}\mathcal U_j+\Lambda^{-1}.
\end{equation*}
If $\mathcal N_\ell(x_i)$ is nonempty and $k$ is its first index, then the fact that it contains consecutive indices and the bound $|\mathcal N_\ell(x_i)|\leq K_\ell$ give
\begin{equation*}
\mathcal N_\ell(x_i) \subseteq \{k,\ldots,\min(k+K_\ell-1,J)\}.
\end{equation*}
Since the $\mathcal U_j$ are nonnegative, taking the maximum over $x_i$ proves the lemma.
\end{proof}

\section{A stopped consecutive-window estimate}

The next lemma is the probabilistic core of the argument.  The proof establishes a conditional exponential moment bound for each stopped layer energy, and iterating this bound controls sums over consecutive layers without requiring independence.

\begin{lemma}\label[lemma]{lem:windows}
Let $2\le y_0<\cdots<y_J$, define $\mathcal U_j$ as above, and let
$\mathcal I$ be a nonempty deterministic family of intervals in
$\{1,\ldots,J\}$, each containing at most $L\geq 1$ indices.
There is an absolute
constant $C$ such that, for $0<q<1$, $a>0$, and $s\ge0$,
\begin{equation*}
 \Prob\Big(
   \max_{I\in\mathcal I}\sum_{j\in I}\mathcal U_j
   >Ca\big(L+\log |\mathcal I|+s\big)
 \Big)
 \le a^{-q}\E Z_{y_0}^q+e^{-s}.
\end{equation*}
\end{lemma}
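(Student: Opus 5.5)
The plan is to stop the mean-one martingale $(Z_y)_{y\ge y_0}$ of \cref{lem:critical-energy} at level $a$. The first term $a^{-q}\E Z_{y_0}^q$ will pay for the event that the threshold is ever crossed. Off that event, every layer energy will have a conditional exponential moment at scale $a$, and this will give the $e^{-s}$ term.

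\emph{Stopping.} Let $T$ be the first prime $p>y_0$ with $Z_p>a$, and set $T=\infty$ if there is none. Doob's maximal inequality for the nonnegative martingale $(Z_y)_{y\ge y_0}$, applied conditionally on $\F_{y_0}$, gives
\[
\Prob\bigl(T<\infty\mid\F_{y_0}\bigr)\le\min\{1,Z_{y_0}/a\}\le (Z_{y_0}/a)^q ,
\]
so $\Prob(T<\infty)\le a^{-q}\E Z_{y_0}^q$. (If $Z_{y_0}>a$ already, the bound is trivial.) The jump bound $Z_p\le CZ_{p^-}$ from \cref{lem:critical-energy} controls the overshoot, giving $Z_{y\wedge T}\le C\max(a,Z_{y_0})$. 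To avoid the case $Z_{y_0}>a$, I would intersect with $\{Z_{y_0}\le a\}$, whose complement is also absorbed by the first term. Then, for every $y$,
\[
\mathcal E_{y\wedge T}\le Ca\,C_f(y)\ll a\log y .
\]

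\emph{Stopped layer energies.} For $u\le y_j$, the quantity $\Psi_f(z,u)$ equals $\E[\Psi_f(z,y_j)\mid\F_u]$, because every term involving a prime in $(u,y_j]$ has conditional mean zero in both models. Consequently $u\mapsto \Psi_f(z,u\wedge T)$ is also a martingale, being a stopped martingale. I define
\[
\mathcal U_j^T=\frac1{\log y_j}\int_0^\infty\sup_{y_{j-1}\le u\le y_j}|\Psi_f(z,u\wedge T)|^2\,\frac{dz}{z^2}.
\]
This quantity is $\F_{y_j}$-measurable and coincides with $\mathcal U_j$ on $\{T=\infty\}$. I then apply \cref{lem:vector-doob} conditionally on $\F_{y_{j-1}}$, using the prime filtration on $(y_{j-1},y_j]$. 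The weighted sum becomes the $dz/z^2$ integral, either by a Riemann-sum approximation or because $\Psi_f$ is a step function of $z$ with finitely many relevant pieces. The terminal variables are $X_z=\Psi_f(z,y_j\wedge T)$. This gives, for all $r>1$,
\[
\E\bigl[(\mathcal U_j^T)^r\mid\F_{y_{j-1}}\bigr]\le (C r)^r\,\E\Bigl[\Bigl(\frac{\mathcal E_{y_j\wedge T}}{\log y_j}\Bigr)^r\Bigm|\F_{y_{j-1}}\Bigr]\le (C' r a)^r .
\]
Summing the series in $r$ with Stirling's formula then yields
\[
\E\bigl[\exp(\mathcal U_j^T/(C''a))\mid\F_{y_{j-1}}\bigr]\le 2 .
\]

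\emph{Iteration and union bound.} For an interval $I$ of length at most $L$, I peel off the last layer using the tower property and the $\F_{y_j}$-measurability of $\mathcal U_j^T$. This gives
\[
\E\exp\Bigl(\sum_{j\in I}\mathcal U_j^T/(C''a)\Bigr)\le 2^{L},
\]
with no independence between layers needed. By Markov's inequality,
\[
\Prob\Bigl(\sum_{j\in I}\mathcal U_j^T>C''a\,(L\log 2+\log|\mathcal I|+s)\Bigr)\le |\mathcal I|^{-1}e^{-s}.
\]
A union bound over $I\in\mathcal I$ then gives $e^{-s}$. Since $\mathcal U_j^T=\mathcal U_j$ on $\{T=\infty\}$, this proves the lemma.

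\emph{Main obstacle.} I expect the delicate step to be the stopping inside the vector Doob argument. One must check that the conditional form of \cref{lem:vector-doob} holds with an absolute constant. One must also check that stopping at a prime time $T$ lying inside a layer keeps $\Psi_f(z,\cdot\wedge T)$ a martingale jointly in all $z$, with terminal energy exactly $\mathcal E_{y_j\wedge T}$, so that the overshoot bound from \cref{lem:critical-energy} applies. I would first verify this by treating the primes of the layer as discrete time steps, and then pass to the integral in $z$ by monotone convergence.
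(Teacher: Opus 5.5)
Your proposal is correct and follows the paper's proof essentially step for step. You stop $Z$ at level $a$ using the overshoot bound from \cref{lem:critical-energy}, apply \cref{lem:vector-doob} layer by layer to the stopped smooth sums, iterate the resulting conditional exponential moments backwards through each interval, and finish with a union bound. The paper settles the points you flag as obstacles in the way you anticipate: it multiplies the terminal variables by $\1_A$ with $A\in\F_{y_{j-1}}$ to get the conditional form, uses the exact discretization of $z$ on the intervals $[n,n+1)$ with weights $1/(n(n+1)\log y_j)$, truncates at $R$ and applies monotone convergence, and uses a single stopping time for all coordinates.

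The differences are cosmetic. The paper builds the indicator of $\{Z_{y_0}\le a\}$ directly into the stopped process, $\widetilde\Psi_a(z,u)=\1_{\{Z_{y_0}\le a\}}\Psi_f(z,u\wedge\sigma_a)$; this is what your conditional moment bound needs in order to hold everywhere and not only on that event. It also controls the crossing event through Doob's inequality for the nonnegative supermartingale $Z^q$, instead of your conditional Doob bound combined with $\min\{1,x\}\le x^q$.
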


\begin{proof}
Put $\mathcal B_a=\{Z_{y_0}\le a\}$.  On $\mathcal B_a$, let $\sigma_a$ be
the first prime $p\in(y_0,y_J]$ for which $Z_p>a$, taking $\sigma_a=y_J$ if
there is no such prime.  On $\mathcal B_a^c$, put $\sigma_a=y_0$.  This is a
bounded stopping time for the prime filtration.  For $u\ge y_0$, define
\begin{equation*}
  \widetilde\Psi_a(z,u)
  =\1_{\mathcal B_a}\Psi_f(z,u\wedge\sigma_a)
\end{equation*}
and
\begin{equation*}
 \mathcal U_j^{(a)}
 =\frac1{\log y_j}\int_0^\infty
   \sup_{y_{j-1}\le u\le y_j}
   |\widetilde\Psi_a(z,u)|^2\,\frac{dz}{z^2}.
\end{equation*}

By \cref{lem:critical-energy}, the stopped energy
can exceed $a$ by only an absolute factor.  Consequently, for every $j$ we get
\begin{equation}\label{eq:stopped-terminal-energy}
 Y_j^{(a)}
 :=\frac1{\log y_j}\int_0^\infty
 |\widetilde\Psi_a(z,y_j)|^2\,\frac{dz}{z^2}
 =\1_{\mathcal B_a}
 \frac{C_f(y_j\wedge\sigma_a)}{\log y_j}
 Z_{y_j\wedge\sigma_a}
 \ll a
\end{equation}
pathwise, where we used $C_f(y)\asymp \log y$.

For fixed $z$, the process $u\mapsto\Psi_f(z,u)$ is a martingale in the
prime cutoff. Indeed, in the Steinhaus model, the increment at a newly revealed prime $p$
is a sum of terms containing $f(p)^k$ with $k\ge1$, and
$\E f(p)^k=0$, while in the Rademacher model only the term $k=1$ occurs, which also satisfies $\E f(p)=0$. Since
$\mathcal B_a\in\F_{y_0}$, the stopped process
$\widetilde\Psi_a(z,u)$ is a martingale on the filtration beginning at
$\F_{y_0}$.  In particular, optional sampling in the finite prime filtration
gives, for
$y_{j-1}\le u\le y_j$,
\begin{equation*}
  \widetilde\Psi_a(z,u)
  =\E[\widetilde\Psi_a(z,y_j)\mid\F_u].
\end{equation*}

Let
$A\in\F_{y_{j-1}}$. Since $A$ is measurable at the beginning of the $j$-th layer,
\begin{equation*}
 \1_A\widetilde\Psi_a(z,u)
 =\E[\1_A\widetilde\Psi_a(z,y_j)\mid\F_u].
\end{equation*}
For fixed $u$, the function $z\mapsto \widetilde\Psi_a(z,u)$ is constant on each interval $[n,n+1)$. For
$n\ge1$, use the coordinate weight
\begin{equation*}
  w_n=\frac1{\log y_j}\frac1{n(n+1)}.
\end{equation*}
Indeed, $\widetilde\Psi_a(z,u)=\widetilde\Psi_a(n,u)$ for
$n\le z<n+1$, while
\begin{equation*}
 \int_n^{n+1}\frac{dz}{z^2}=\frac1{n(n+1)}.
\end{equation*}
Since the integrand vanishes for $z<1$, the integral defining
$\mathcal U_j^{(a)}$ is exactly
\begin{equation*}
 \mathcal U_j^{(a)}
 =\sum_{n\ge1}w_n
   \sup_{y_{j-1}\le u\le y_j}|\widetilde\Psi_a(n,u)|^2.
\end{equation*}
Fix an integer $m\geq2$ and truncate the coordinates at $R$.  Let
$p_1<\cdots<p_N$ be the primes in $(y_{j-1},y_j]$, and apply
\cref{lem:vector-doob} with $r=m$, filtration $\mathcal G_0=\F_{y_{j-1}}$ and $\mathcal G_k=\F_{p_k}$,
terminal variables $X_n=\1_A\widetilde\Psi_a(n,y_j)$ for $1\leq n\leq R$,
and weights $w_n$. Since
\begin{equation*}
 \E[X_n\mid\mathcal G_k]
 =\1_A\widetilde\Psi_a(n,p_k),
\end{equation*}
and the cutoff process is constant between prime jumps, this gives
\begin{equation*}
 \E\Big[
  \1_A\Big(
   \sum_{n=1}^Rw_n
   \sup_{y_{j-1}\leq u\leq y_j}
   |\widetilde\Psi_a(n,u)|^2
  \Big)^m
 \Big]
 \leq
 (Cm)^m
 \E\Big[
  \1_A\Big(
   \sum_{n=1}^Rw_n
   |\widetilde\Psi_a(n,y_j)|^2
  \Big)^m
 \Big].
\end{equation*}
The expression on the right is at most
$(Cma)^m\Prob(A)$ by \eqref{eq:stopped-terminal-energy}.
Letting $R\to\infty$ and using monotone convergence yields
\begin{equation*}
 \E\big[\1_A(\mathcal U_j^{(a)})^m\big]
 \leq(Cma)^m\Prob(A).
\end{equation*}
Since this holds for every $A\in\F_{y_{j-1}}$,
\begin{equation}\label{eq:conditional-layer-moments}
 \E\big[(\mathcal U_j^{(a)})^m\mid\F_{y_{j-1}}\big]
 \le(Cma)^m
\end{equation}
almost surely.  The same estimate for $m=1$ follows from conditional
Cauchy--Schwarz and the case $m=2$.

Using \eqref{eq:conditional-layer-moments} and $m!\ge(m/e)^m$ in the exponential series, choose a sufficiently small $\theta>0$ to obtain
\begin{equation*}
 \E\big[
   \exp(\theta\mathcal U_j^{(a)}/a)
   \mid\F_{y_{j-1}}
 \big] \leq
 1+\sum_{m\geq1}\frac{(Cm\theta)^m}{m!} \leq 1+\sum_{m\geq 1}(eC\theta)^m\leq 2.
\end{equation*}
For $u\le y_j$, the value $u\wedge\sigma_a$ is determined by $\F_u$,
so $\mathcal U_j^{(a)}$ is $\F_{y_j}$-measurable.  Iterating
this inequality backwards through a fixed interval $I$ gives
\begin{equation}\label{eq:window-exponential}
 \E\exp\Big(\frac\theta a
   \sum_{j\in I}\mathcal U_j^{(a)}\Big)
 \le2^{|I|}.
\end{equation}
For a fixed $I\in\mathcal I$, \eqref{eq:window-exponential} and the exponential form of Markov's inequality give
\begin{equation*}
 \Prob\Big(
  \sum_{j\in I}\mathcal U_j^{(a)}>T
 \Big)
 \leq
 \exp\Big(L\log2-\frac{\theta T}{a}\Big).
\end{equation*}
Taking
$
 \displaystyle T=\frac a\theta\big(L\log2+\log|\mathcal I|+s\big)
$
makes the right-hand side at most
$e^{-s}/|\mathcal I|$.  A union bound over $I\in\mathcal I$ therefore
shows that, outside an event of probability at most $e^{-s}$,
\begin{equation*}
 \max_{I\in\mathcal I}\sum_{j\in I}\mathcal U_j^{(a)}
 \leq
 Ca\big(L+\log |\mathcal I|+s\big).
\end{equation*}

Finally, $(Z_u^q)_u$ is a nonnegative supermartingale.  Doob's maximal
inequality gives
\begin{equation*}
 \Prob\Big(\sup_{y_0\le u\le y_J}Z_u>a\Big)
 \le a^{-q}\E Z_{y_0}^q.
\end{equation*}
On the complementary event, $\sigma_a=y_J$ and
$\mathcal U_j^{(a)}=\mathcal U_j$ for every $j$.  Combining the preceding
two estimates proves the lemma.
\end{proof}

\section{Proof of the main theorem}
Use the same notation as above.
After changing $G$ on a bounded interval, replace it by
\begin{equation*}
  \min\{G(t),t^{3/2}\}.
\end{equation*}
This function is still eventually nondecreasing, gives a stronger conclusion, and preserves the hypothesis because the new summand is $(\log\ell)/\ell^2$ wherever the function is changed.  We may therefore suppose that $G(t)\le t^{3/2}$ for all large $t$.  Put
\begin{equation*}
  B_\ell=\frac{G(\ell)^2}{\ell}.
\end{equation*}
Then the hypothesis becomes
\begin{equation}\label{eq:B-summability}
 \sum_{\ell\geq3}\frac{\log\ell}{B_\ell}<\infty,
\end{equation}
while the preceding reduction gives $B_\ell\leq\ell^2$.  In particular,
$\log\ell/B_\ell\to0$.
Set
\begin{equation*}
  a_\ell=\frac{B_\ell}{\sqrt\Lambda},
  \qquad
  q_\ell=1-\frac1{\log\ell}.
\end{equation*}
For all sufficiently large $\ell$, one has $2/3\le q_\ell<1$.  Since
$\log_2y_0=\Lambda-2\ell\asymp\Lambda$ and
\begin{equation*}
  \frac{\log\ell}{\sqrt{\log_2y_0}}
  \asymp\frac{\log\ell}{\sqrt\Lambda}=o(1),
\end{equation*}
\cref{lem:critical-moment} gives
\begin{equation*}
  \E Z_{y_0}^{q_\ell}
  \ll\Big(\frac{\log\ell}{\sqrt\Lambda}\Big)^{q_\ell}.
\end{equation*}
Consequently,
\begin{equation}\label{eq:stopping-term}
 a_\ell^{-q_\ell}\E Z_{y_0}^{q_\ell}
 \ll\Big(\frac{\log\ell}{B_\ell}\Big)^{q_\ell}
 \ll\frac{\log\ell}{B_\ell}.
\end{equation}
The last inequality holds because, if
$u=\log\ell/B_\ell$, then $u<1$ eventually, yet 
\begin{equation*}
  u^{q_\ell}
  =u\exp\Big(\frac{\log(1/u)}{\log\ell}\Big)
  \ll u,
\end{equation*}
using $1/u=B_\ell/\log\ell \le\ell^2$.

Apply \cref{lem:windows} with
\begin{equation*}
 a=a_\ell,\qquad q=q_\ell,\qquad
 L=K_\ell,
 \qquad s=2\log\ell,
\end{equation*}
and with $\mathcal I$ equal to the family of intervals
\begin{equation*}
 \{k,\ldots,\min(k+K_\ell-1,J)\},
 \qquad 1\leq k\leq J.
\end{equation*}
Since $K_\ell\ll\ell$, $|\mathcal I|\leq J$, and $\log J\ll\ell$,
\eqref{eq:stopping-term} gives
\begin{equation*}
 \Prob\Big(
  \max_{1\leq k\leq J}
  \sum_{j=k}^{\min(k+K_\ell-1,J)}\mathcal U_j
  >
  C\frac{\ell B_\ell}{\sqrt\Lambda}
 \Big)
 \ll
 \frac{\log\ell}{B_\ell}+\ell^{-2},
\end{equation*}
for a sufficiently large absolute constant $C$.
The right-hand side is summable by \eqref{eq:B-summability}.  Hence
Borel--Cantelli shows that, almost surely for all sufficiently large
$\ell$,
\begin{equation*}
 \max_{1\leq k\leq J}
 \sum_{j=k}^{\min(k+K_\ell-1,J)}\mathcal U_j
 \ll
 \frac{\ell B_\ell}{\sqrt\Lambda}.
\end{equation*}
Together with \cref{lem:smoothing}, this gives
\begin{equation}\label{eq:clock-bound}
  \max_{X_\ell<x_i\le X_{\ell+1}}
  \frac{V_\ell(x_i)}{x_i}
  \ll\frac{\ell B_\ell}{\sqrt\Lambda}.
\end{equation}
The term $\Lambda^{-1}$ has been absorbed.  Indeed,
\eqref{eq:B-summability} gives $B_\ell/\log\ell\to\infty$, and hence
$\Lambda^{-1}=o(\ell B_\ell/\sqrt\Lambda)$.

For a test point in the macroblock, put
\begin{equation*}
  t_i=D\sqrt{x_i}\Lambda^{1/4}
       \sqrt{\ell B_\ell},
  \qquad
  v_i=Cx_i\frac{\ell B_\ell}{\sqrt\Lambda},
\end{equation*}
where $C$ is large enough for \eqref{eq:clock-bound} and $D$ is a sufficiently large constant.  
Then \eqref{eq:main-subgaussian} and \eqref{eq:grid-cardinality}, together with a union
bound, give
\begin{align*}
 \Prob\big(\exists\, i:\ |\mathcal M_\ell(x_i)|>t_i,
                     \ V_\ell(x_i)\le v_i\big)
 &\le
 2\sum_{X_\ell<x_i\leq X_{\ell+1}}\exp(-t_i^2/(10v_i)) \\
 &\le2\exp(O_{c_0}(\Lambda)-D^2\Lambda/(10C)) \\
 &\ll e^{-c'\Lambda}
\end{align*}
for some $c'>0$.
These probabilities are summable in $\ell$. The quadratic variation bound fails only on another summable sequence of events, so the probabilities $\Prob(\exists\, i: |\mathcal M_\ell(x_i)|>t_i)$ are also summable.
Hence, another application of Borel--Cantelli and
\cref{lem:largest-prime,lem:negligible} give
\begin{equation}\label{eq:test-point-bound}
  |M_f(x_i)|
  \ll_{f,G}\sqrt{x_i}\Lambda^{1/4}
       \sqrt{\ell B_\ell}
  =\sqrt{x_i}\Lambda^{1/4}G(\ell)
\end{equation}
at every sufficiently large test point.

Now we extend this estimate from test points to all real $x$.
Let $x_{i-1}<x\leq x_i$, and suppose that
$X_\ell<x\leq X_{\ell+1}$.  By \cref{lem:sparse-grid},
\begin{equation*}
 |M_f(x)-M_f(x_{i-1})|
 \ll_f\frac{\sqrt{x_i}}{\log x_i}.
\end{equation*}
Let $r$ be the macroblock index containing $x_{i-1}$.  Since
$x_i/x_{i-1}\to1$, whereas $X_\ell/X_{\ell-1}\to\infty$, one has
$r\in\{\ell-1,\ell\}$ for all sufficiently large $i$.  Consequently,
\begin{equation*}
 \Lambda_r\asymp\log_2x,
 \qquad
 G(r)\leq G(\log_3x),
\end{equation*}
by the eventual monotonicity of $G$.  The test-point bound
\eqref{eq:test-point-bound} therefore gives
\begin{equation*}
 |M_f(x_{i-1})|
 \ll_{f,G}
 \sqrt{x}(\log_2x)^{1/4}G(\log_3x).
\end{equation*}
Finally, $x_i/x\to1$, and $G$ is eventually bounded below by a positive
constant, so
\begin{equation*}
 \frac{\sqrt{x_i}}{\log x_i}
 =
 o\big(
 \sqrt{x}(\log_2x)^{1/4}G(\log_3x)
 \big).
\end{equation*}
This proves the theorem.


\bibliographystyle{amsalpha}
\bibliography{reference}

\end{document}